\documentclass[11pt]{article}
\usepackage[a4paper,top=3cm,bottom=3.5cm,left=2.5cm,right=2.5cm,marginparwidth=1.75cm]{geometry}
\usepackage[T1]{fontenc}
\usepackage{graphicx} 
\usepackage{amsmath}
\usepackage{amssymb,amsthm}
\usepackage{matlab-prettifier}
\usepackage[normalem]{ulem}
\usepackage{algpseudocode}
\usepackage[title]{appendix}
\usepackage{subcaption}
\newtheorem{theorem}{Theorem}

\newtheorem{proposition}{Proposition}
\newtheorem{lemma}{Lemma}
\newtheorem{remark}{Remark}
\usepackage[citestyle=numeric-comp,]{biblatex}
\usepackage{hyperref}
\hypersetup{colorlinks,allcolors=black}
\newtheorem{definition}{Definition}
\usepackage{caption}
\usepackage{subcaption}
\usepackage{appendix}
\usepackage{algorithm}
\usepackage{algpseudocode}

\usepackage{siunitx}
\usepackage{booktabs}
\usepackage{amsmath}  

\newcommand{\abs}[1]{\left|{#1}\right|}

\title{Fourier--Galerkin Methods for Subwavelength Resonances in two-dimensional Acoustic Metamaterials}
\author{Jinghao Cao\thanks{\footnotesize Computing and Mathematical Sciences Department, California Institute of Technology, Pasadena, CA 91125, USA ( jinghao.cao@caltech.edu)} }
\date{}
\begin{document}
\maketitle
\begin{abstract}
We present a Fourier--Galerkin asymptotic framework for the analysis and computation of subwavelength resonances in two-dimensional scattering problems in finite domains. Starting from the boundary integral formulation, we apply a Fourier--Galerkin discretization to derive an explicit finite-dimensional effective matrix whose kernel characterizes the resonant frequencies. In the subwavelength regime, we obtain asymptotic expansions of this matrix in terms of $\omega$ and the material contrast, identifying the leading-order operators and their kernel structure.

This reduction transforms the resonance problem into a low-dimensional nonlinear eigenvalue problem, avoiding large-scale discretizations and global root-search procedures. The entries of the effective matrix are explicitly computable and admit fast evaluation using FFT-based quadrature. The resulting approach provides an efficient and robust computational framework for resonances in general smooth geometries.
\end{abstract}
\tableofcontents
\section{Introduction}

The control and manipulation of wave-matter interactions at subwavelength scales has driven the rapid development of artificially structured metamaterials \cite{lemoult.kaina.ea2016Soda,yves.fleury.ea2017Crystalline,phononic1,phononic2,ammari2024functional}. By engineering media with locally resonant microstructures—composed of fundamental building blocks that are themselves subwavelength resonators—it is possible to achieve exotic and highly customizable wave phenomena \cite{ammari2017subwavelength,yves.fleury.ea2017Crystalline,yves2017topological,wang2019subwavelength}. A natural and highly effective route to designing such metamaterials is through high-contrast resonators, where the subwavelength nature arises from extreme material contrast. Such structures can be used to achieve a wide variety of effects \cite{davies2019fully,ammari.davies.ea2022Exceptional,ammari2020highfrequency,ammari2020highorder,ammari2017subwavelength,ammari2019double,ammari2020honeycomb,ammari2018mathematical}. Typical examples span multiple physical domains, from acoustic Minnaert resonances generated by air bubbles in water \cite{ammari.fitzpatrick.ea2018Minnaert} to electromagnetic wave confinement in high-contrast dielectric and plasmonic particles \cite{ammari.li.ea2023Mathematical,ammari.millien.ea2017Mathematical}.

There has been substantial effort dedicated to the rigorous asymptotic analysis of these systems, establishing a deep mathematical understanding of how microscopic resonances dictate macroscopic wave behavior. Building on this analytical foundation, recent theoretical and experimental advances have further highlighted the profound implications of symmetry breaking. By introducing geometric defects, for instance, eigenmodes can be localized to create topological edge modes \cite{erik_thesis,bryn_thesis,feppon.cheng.ea2023Subwavelength}. Similarly, introducing non-reciprocal coupling in non-Hermitian systems violates the conventional bulk-edge correspondence principle, triggering the skin effect where bulk eigenmodes condense at the boundaries \cite{ghatak.brandenbourger.ea2020Observation,skinadd1,skinadd2,skinadd3}. Furthermore, breaking time-reversal symmetry via time-modulated material parameters allows for dynamic wave control, yielding non-reciprocal band gaps and unidirectional edge modes \cite{ammari2020time,Ammari_nonrecip,ammari_cao_transmprop}.

Despite these advances, a significant challenge remains: most existing asymptotic frameworks are restricted to highly symmetric configurations, such as circular or spherical domains, which limits their applicability to real-world metamaterial design. The precise analysis and optimization of complex macroscopic phenomena continue to demand both rigorous asymptotic characterization and increasingly efficient computational methods that can move beyond these idealized geometries. This need is especially critical for engineering purposes, where the transition from theoretical models to robust, functional devices relies on the ability to predict and manipulate resonant behavior with high fidelity across arbitrary and complex domain geometries.

To address this need, we develop a Fourier--Galerkin framework for the analysis and computation of subwavelength resonances in two-dimensional scattering problems. Starting from a boundary integral formulation, we perform a systematic modal decomposition to derive an explicit finite-dimensional effective matrix equation governing the resonant frequencies. This reduction, rooted in rigorous asymptotic analysis, results in a framework that is both analytically revealing and computationally efficient. The formulation yields rigorous asymptotic expansions in the subwavelength regime and transforms the scattering problem into a numerically tractable nonlinear eigenvalue problem. Crucially, the effective matrix can be computed explicitly for general geometries, enabling robust resonance computations without the need for global root-search procedures. Combined with fast quadrature techniques, the proposed approach extends naturally to complex domains, bridging the gap between theoretical characterization and practical engineering design.

Ultimately, these results establish a unified analytical and computational framework connecting boundary integral equations, asymptotic analysis, and fast algorithms. By providing a rigorous tool to evaluate complex operator perturbations and localized wave interactions, this framework serves as an essential mathematical and numerical foundation for the analysis and design of next-generation subwavelength metamaterials.

This paper is organized as follows. In Section~\ref{sec:pre}, we introduce the setting of the subwavelength transmission problem. We recall the classical boundary integral formulations and the boundary element methods necessary to derive a Galerkin matrix representation of the two-dimensional subwavelength problem. In Section~\ref{sec:asymp}, we derive two reduced formulations via asymptotic analysis of~\eqref{eq:matrixA} in the limit $\delta \to 0$. These reductions preserve spectral convergence while significantly reducing the problem dimension and computational cost; we demonstrate how the reduced model provides accurate initial guesses for resonance refinement and an analytically tractable framework for classifying subwavelength branches. Finally, in Section~\ref{sec:num}, we present the numerical treatment for the FFT-based full boundary integral equation and the effective matrix implementation. We discuss strategies for implementing the effective Galerkin matrices for arbitrary smooth domains and provide resonance and mode simulations for circular, ring-shaped, and elliptical geometries.
\section{Preliminary}
\label{sec:pre}
Let $D \subset \mathbb R^2$ be a union of disjoint bounded Lipschitz domains
\[
D = \bigcup_{i=1}^N D_i,
\qquad
\partial D = \bigcup_{i=1}^N \partial D_i,
\]
with outward unit normal $\nu$ defined component-wise. We consider the Helmholtz transmission problem: find $u \in H^1_{\mathrm{loc}}(\mathbb{R}^2)$ such that
\begin{equation}
\label{eq:helmholtz_multi}
\begin{cases}
\Delta u + \omega^2 u = 0, & \text{in } \mathbb{R}^2 \setminus \overline{D}, \\[0.3em]
\Delta u + \omega^2 u = 0, & \text{in } D_i,\quad i=1,\dots,N,
\end{cases}
\end{equation}
subject to the transmission conditions on each $\partial D_i$,
\begin{equation}
\label{eq:transmission_multi}
u^+ = u^-,
\qquad
\delta\,\partial_\nu u^+ - \partial_\nu u^- = 0 \quad \text{on } \partial D,
\end{equation}
and the Sommerfeld radiation condition
\begin{equation}
\label{eq:sommerfeld}
\lim_{|x|\to\infty} |x|^{1/2}\left(\partial_{|x|}u - i\omega u\right)=0.
\end{equation}
We adopt the convention that the superscripts $+$ and $-$ indicate traces taken from the exterior $\mathbb{R}^2\setminus\overline{D}$ and the interior $D$, respectively.

In this section, we recall classical boundary integral formulations and boundary element methods in order to derive a Galerkin matrix representation (Equation \eqref{eq:AN_full_entries}) of the two-dimensional subwavelength problem. 
Section~\ref{sec:BIE} follows the construction of subwavelength resonator models in \cite{ammari2024functional}, while Section~\ref{sec:BEM} presents the corresponding boundary element discretization (see \cite{colton_kress,nedelec2001acoustic}) applied to this setting.
\subsection{Boundary Integral Equations}
\label{sec:BIE}
We recall the classical boundary integral formulation of the transmission problem~\eqref{eq:helmholtz_multi}, following the construction of~\cite{ammari2024functional}.
Let
\begin{equation}
\Phi_\omega(x,y)=\frac{i}{4}H_0^{(1)}(\omega|x-y|)
\end{equation}
denote the Helmholtz fundamental solution in two dimensions. We represent the interior and exterior fields by single-layer potentials
\begin{equation}
\label{eq:single_layer_multi}
u_\pm(x)
=
\sum_{j=1}^N \mathcal S_{\partial D_j}^\omega[\varphi_j^\pm](x)
\quad \textrm{with} \quad 
\mathcal S_{\partial D_j}^\omega[\varphi](x)
=
\int_{\partial D_j}\Phi_\omega(x,y)\,\varphi(y)\,dS(y).
\end{equation}
We introduce global densities
\begin{equation}
\phi=(\phi_j)_{j=1}^N,
\qquad
\psi=(\psi_j)_{j=1}^N,
\end{equation}
defined on $\partial D=\bigcup_{j=1}^N \partial D_j$. The associated boundary integral operators act component-wise as
\begin{equation}
(S_\omega \varphi)_i(x)
=
\sum_{j=1}^N \int_{\partial D_j}\Phi_\omega(x,y)\,\varphi_j(y)\,dS(y),
\qquad x\in\partial D_i,
\end{equation}
and
\begin{equation}
(K_\omega' \varphi)_i(x)
=
\sum_{j=1}^N \operatorname{p.v.}\!\int_{\partial D_j}
\partial_{\nu(x)}\Phi_\omega(x,y)\,\varphi_j(y)\,dS(y),
\qquad x\in\partial D_i.
\end{equation}
The single-layer potential satisfies the classical trace and jump relations (See \cite{nedelec2001acoustic})
\begin{equation}
\mathrm{tr}^\pm \mathcal S^\omega[\varphi]=S_\omega[\varphi],
\quad \textrm{and} \quad
\mathrm{tr}_{\nu}^\pm \mathcal S^\omega[\varphi]
=
\left(\pm \tfrac12 I + K_\omega'\right)[\varphi].
\end{equation}
Imposing the transmission conditions across $\partial D$ leads to the boundary integral system
\begin{equation}
\label{eq:block_system}
\begin{cases}
S_\omega(\phi-\psi)=0,\\[0.4em]
\delta\left(\tfrac12 I + K_\omega'\right)\phi
-
\left(-\tfrac12 I + K_\omega'\right)\psi
=0.
\end{cases}
\end{equation}
We therefore introduce the operator-valued function
\begin{equation}
\label{eq:matrixA}
\mathcal A(\omega,\delta)
=
\begin{pmatrix}
S_\omega & -S_\omega\\[0.4em]
\left(-\tfrac12 I + K_\omega'\right) & -\delta\left(\tfrac12 I + K_\omega'\right)
\end{pmatrix}.
\end{equation}

\begin{definition}
A complex number $\omega$ is called an \emph{eigenfrequency} (or \emph{resonance frequency}) of the transmission problem ~\eqref{eq:helmholtz_multi} if there exists $(\phi,\psi)\neq(0,0)$ such that
\begin{equation}
\mathcal A(\omega,\delta)
\begin{pmatrix}
\phi\\ \psi
\end{pmatrix}
=0.
\end{equation}
Further, let $\Omega\subset\mathbb C$ be a neighborhood of the origin. A {subwavelength resonance} is a continuous function $\omega(\delta)\in\Omega$ such that
\begin{equation}
\ker \mathcal A\bigl(\omega(\delta),\delta\bigr)\neq\{0\},
\end{equation}
for all sufficiently small $\delta>0$ which satisfies
\begin{equation}
\omega(\delta)\to 0 \quad \text{as } \delta\to 0.
\end{equation}
\end{definition}

\subsection{Fourier--Galerkin Boundary Element Formulation}
\label{sec:BEM}
In two dimensions, the logarithmic nature of the Helmholtz Green's function makes the subwavelength resonance problem non-standard, and the existence and enumeration of resonances require a careful operator-theoretic analysis. The results in~\cite{ammari2024functional} show that a two dimensional system of $N$ subwavelength resonators admits exactly $N$ subwavelength resonant frequencies, and provide asymptotic formulas for computing them. While the analysis there is complete at the operator level, it leads in two dimensions to matrix entries of considerable complexity: the formula for $\mathcal{A}^{(2)}_{\omega,\delta}$ involves logarithmic corrections, integrals of $S_0^{-1}$, and the equilibrium densities $\psi_j$, all of which must be computed indirectly. Rather than following this route, we propose a direct Fourier--Galerkin discretization of the transmission system~\eqref{eq:block_system}, which avoids inverting $S_0$ entirely and yields matrix entries expressed as explicit, computable boundary integrals of the layer potential kernels. The connection between the resulting effective matrix and the classical capacitance matrix is made precise in Section~\ref{sec:asymp}.

Let each disjoint boundary component $\partial D_j$ admits a $2\pi$–periodic parametrization
\begin{equation}
x_j(\theta),\qquad \theta\in[0,2\pi).   
\end{equation}
Fix a truncation order $F\in\mathbb N$ and define the trigonometric polynomial space on each component
$\partial D_j$ 
\begin{equation}
    \mathcal T_F(\partial D_j):=
\mathrm{span}\{e^{in\theta}\;:\;|n|\le F\}.
\end{equation}
The direct sum gives the $N(2F+1)$ dimensional global approximation space
\begin{equation}
\mathcal X_F
=
\bigoplus_{j=1}^N \mathcal T_F(\partial D_j).   
\end{equation}
The densities are approximated by truncated Fourier expansions
\begin{equation}\label{eq:fourier_ansatz}
\phi_j(\theta)\approx
\phi_{j,F}(\theta)
=
\sum_{n=-F}^{F}\hat\phi_{j,n}e^{in\theta},
\qquad
\psi_j(\theta)\approx
\psi_{j,F}(\theta)
=
\sum_{n=-F}^{F}\hat\psi_{j,n}e^{in\theta},
\end{equation}
where the unknown coefficients are given by
\[
\hat\phi_{j,n},\ \hat\psi_{j,n}\in\mathbb C,
\qquad
j=1,\dots,N,\quad n=-F,\dots,F.
\]
Denote the vectors of unknown by
\begin{equation}
    \widehat\phi :=
(\hat\phi_{1,-F},\dots,\hat\phi_{1,F},\ \dots,\ \hat\phi_{N,-F},\dots,\hat\phi_{N,F})
\in\mathbb C^{N(2F+1)},
\end{equation}
and
\begin{equation}
    \widehat\psi :=
(\hat\psi_{1,-F},\dots,\hat\psi_{1,F},\ \dots,\ \hat\psi_{N,-F},\dots,\hat\psi_{N,F})
\in\mathbb C^{N(2F+1)},
\end{equation}
Choose the same space for testing and use the $L^2$ inner product on each $\partial D_i$:
\begin{equation}
    \langle f,g\rangle_{\partial D_i}
:=
\int_{\partial D_i} f(\theta)\,\overline{g(\theta)}\,ds.
\end{equation}
Let the test functions on $\partial D_i$ be the Fourier modes
\begin{equation}
    \eta_{i,m}(\theta):=e^{imt},\qquad |m|\le F.
\end{equation}
The \emph{discrete problem} is thus to find $(\phi_F,\psi_F)\in\mathcal X_F\times\mathcal X_F$,
not both zero, such that for all $i=1,\dots,N$ and all $|m|\le F$,
\begin{equation}\label{eq:galerkin_eqs}
\Big\langle
\eta_{i,m},\ (S_\omega(\phi_F-\psi_F))_i
\Big\rangle_{\partial D_i}=0,
\qquad
\Big\langle
\eta_{i,m},\
\big((-\tfrac12 I+K_\omega')\phi_F-\delta(\tfrac12 I+K_\omega')\psi_F\big)_i
\Big\rangle_{\partial D_i}=0.
\end{equation}
Since $S_\omega$ and $K_\omega'$ are {linear} in the densities, inserting the expansions \eqref{eq:fourier_ansatz} into \eqref{eq:galerkin_eqs} yields a
finite system that is \emph{linear in the coefficients} $(\widehat\phi,\widehat\psi)$:
\begin{equation}\label{eq:matrix_pencil}
A_F(\omega)
\binom{\widehat\phi}{\widehat\psi}
=0,
\qquad
A_F(\omega)\in\mathbb C^{(2N(2F+1))\times (2N(2F+1))}
\end{equation}
with the matrix entries given by
\begin{equation}\label{eq:AN_full_entries}
\bigl(A_F(\omega)\bigr)_{(\ell,(i,m)),(r,(j,n))}
=
\begin{cases}
\displaystyle
\big\langle \eta_{i,m},\, (S_\omega[\eta_{j,n}])_i \big\rangle_{\partial D_i},
& \ell=1,\ r=1, \\[6pt]

\displaystyle
-\big\langle \eta_{i,m},\, (S_\omega[\eta_{j,n}])_i \big\rangle_{\partial D_i},
& \ell=1,\ r=2, \\[6pt]

\displaystyle
\big\langle \eta_{i,m},\, \bigl((-\tfrac12 I + K_\omega')[\eta_{j,n}]\bigr)_i \big\rangle_{\partial D_i},
& \ell=2,\ r=1, \\[6pt]

\displaystyle
-\delta\,
\big\langle \eta_{i,m},\, \bigl((\tfrac12 I + K_\omega')[\eta_{j,n}]\bigr)_i \big\rangle_{\partial D_i},
& \ell=2,\ r=2.
\end{cases}
\end{equation}
for \(i,j=1,\dots,N\), \(|m|,|n|\le F\), where \(\ell=1,2\) labels the
row block corresponding to the first and second equations in
\eqref{eq:galerkin_eqs}, and \(r=1,2\) labels the column block
corresponding to the coefficients of \(\widehat\phi\) and \(\widehat\psi\),
respectively.
The following result follows from classical literature on boundary 
element methods, such as~\cite{colton_kress,saranen2002periodic} in the case where the boundary of the 
resonators is smooth.

\begin{proposition}[Exponential convergence for smooth boundaries]
\label{prop:smooth_convergence}
Let $\partial D_j = \partial D_j$ be smooth for each
$j = 1,\dots,N$, and let $\omega$ be a simple resonant frequency of $A_F$ with corresponding eigenmode $u \in L^2(\partial D)$.
Then the following hold:
\begin{itemize}
    \item 
    Let $\omega_F$ denote the resonant frequency computed by the Fourier-Galerkin method with truncation parameter $F$. There exist constants $c > 0$ and $F_0 \in \mathbb{N}$, depending on $\omega$ and the geometry, such that for all $F \geq F_0$,
    \begin{equation}
    \label{eq:exp_conv_omega}
        |\omega_F - \omega| \leq C\,e^{-cF},
    \end{equation}
    where $C > 0$ is independent of $F$.

    \item
    Let $u_F^F \in \mathcal{X}_F$ be the corresponding Galerkin eigenmode, normalised so that $\|u_F^F\|_{L^2(\partial D)} = 1$.
    Then
    \begin{equation}
    \label{eq:exp_conv_mode}
        \|u_F^F - u\|_{L^2(\partial D)} \leq C'\,e^{-c'F},
    \end{equation}
    for constants $C', c' > 0$ independent of $F$.
\end{itemize}
\end{proposition}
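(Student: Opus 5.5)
The plan is to view the Fourier--Galerkin scheme \eqref{eq:galerkin_eqs} as a Galerkin discretization of the holomorphic operator-valued function $\omega\mapsto\mathcal A(\omega,\delta)$ in \eqref{eq:matrixA}. I would then combine three classical ingredients: Fredholm structure of $\mathcal A$, exponential approximation of analytic periodic functions by trigonometric polynomials, and Karma's/Osborn-type eigenvalue perturbation theory for holomorphic Fredholm operator functions (in the form used by Steinbach--Unger for boundary element eigenvalue problems).

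\textbf{Step 1: splitting of the operators.} Using the parametrizations $x_j(\theta)$, I would split the kernel of $S_\omega$ on $\partial D$ as
\[
-\tfrac{1}{2\pi}\log\bigl|2\sin\tfrac{\theta-t}{2}\bigr| + R_\omega(\theta,t),
\]
with $R_\omega$ analytic in $(\theta,t)$ whenever the curves are analytic; for merely $C^\infty$ curves, $R_\omega$ is $C^\infty$. The logarithmic part is diagonal in the Fourier basis with symbol $\sim 1/(2|n|)$; the $n=0$ mode is handled by the usual shift. Hence $S_\omega = V + \mathcal R_\omega$, where $V$ is an isomorphism $H^{s}\to H^{s+1}$ and $\mathcal R_\omega$ is smoothing. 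Likewise, $K'_\omega$ has a smooth (analytic) kernel on smooth curves. Preconditioning the first block row by $V^{-1}$ shows that $\mathcal A(\omega,\delta)$ is, on $H^{-1/2}\times H^{-1/2}\to H^{1/2}\times H^{-1/2}$, an invertible diagonal principal part $\mathrm{diag}(V,\cdot)$ plus a compact perturbation. It is therefore Fredholm of index zero and holomorphic in $\omega$ away from the branch cut. Because the principal parts are Fourier multipliers, Galerkin projection onto $\mathcal X_F$ commutes with them. This gives the discrete inf-sup (regular approximation) condition needed by the abstract theory, with the compact part converging in operator norm.

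\textbf{Step 2: abstract eigenvalue estimate.} For a simple eigenvalue $\omega$ of the holomorphic Fredholm function (I read ``simple resonant frequency of $A_F$'' as simple for $\mathcal A$), Karma's theorem gives $|\omega_F-\omega|\lesssim \epsilon_F^2$ or at least $\lesssim\epsilon_F$. It likewise gives a bound on the distance of the normalised discrete kernel vector to the true one by $\epsilon_F$. Here
\[
\epsilon_F=\sup_{\text{eigvecs}}\inf_{v\in\mathcal X_F}\|\cdot-v\|,
\]
the best approximation error of the (adjoint) eigenfunctions $(\phi,\psi)$ in $\mathcal X_F^2$.

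\textbf{Step 3: regularity and the obstacle.} It remains to show $\epsilon_F\le Ce^{-cF}$. This requires the densities $\phi_j,\psi_j$ to extend holomorphically to a strip $|\mathrm{Im}\,\theta|<\sigma$. For smooth ($C^\infty$) curves, bootstrapping the equations via the smoothing remainders only gives super-algebraic convergence $O(F^{-k})$ for every $k$. I therefore expect this to be the main obstacle, and would strengthen ``smooth'' to ``analytic'' boundaries, consistent with the exponential claim. With analytic $x_j$, the kernels $R_\omega$ and that of $K'_\omega$ extend analytically to a strip, and the equation $V\phi=-\mathcal R_\omega\phi+\ldots$ propagates analyticity of the density, as in Kress's analysis of Nyström/Galerkin methods for analytic data. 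The Paley--Wiener estimate $|\hat\phi_{j,n}|\le Ce^{-\sigma|n|}$ then yields $\epsilon_F\le Ce^{-\sigma' F}$.

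\textbf{Step 4: transfer to the frequency and the mode.} Inserting the bound from Step 3 into Step 2 gives \eqref{eq:exp_conv_omega}. Fixing the phase of the normalised discrete mode $u^F_F$ relative to $u$ gives \eqref{eq:exp_conv_mode}, with the $H^{-1/2}$ error upgraded to $L^2$ by the same analyticity together with an inverse inequality on $\mathcal X_F$.
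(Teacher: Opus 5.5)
The paper gives no argument for this proposition beyond citing Colton--Kress and Saranen--Vainikko. Your outline (Fredholm structure of $\mathcal A$, Karma's perturbation theory for holomorphic Fredholm operator functions, exponential best approximation of strip-analytic densities) is the classical argument behind that citation. You are also right that this machinery gives $e^{-cF}$ only for \emph{analytic} curves. For merely $C^\infty$ boundaries it yields $O(F^{-k})$ for every $k$, so the hypothesis ``smooth'' in the statement is too weak, and your strengthening is necessary rather than cosmetic.

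The step that fails as written is how you get strip-analyticity of the densities in Step~3. For $\omega\neq0$, neither your remainder $R_\omega$ nor the kernel of $K_\omega'$ is analytic, or even $C^\infty$, on the diagonal. Since $\Phi_\omega(x,y)=-\frac{1}{2\pi}J_0(\omega r)\log r+(\text{analytic})$ with $r=|x-y|$, the remainder contains $-\frac{1}{2\pi}\bigl(J_0(\omega r)-1\bigr)\log r\approx\frac{\omega^2}{8\pi}r^2\log r$. Likewise $\partial_{\nu_x}\Phi_\omega$ contains $(x-y)\cdot\nu_x\log r$ times an analytic factor; this is exactly the $O(r^2\log r)$ behaviour the paper itself points out for $K_2'$ in Section~\ref{sec:num2}. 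These pieces are smoothing, so your bootstrap does give $C^\infty$ densities. But they do not map $H^{-1/2}$ into functions holomorphic in a strip, so ``$V\phi=-\mathcal R_\omega\phi+\dots$ propagates analyticity'' does not follow. You need one of two repairs. The first is Kress's splitting $L_1(\theta,t)\log\bigl(4\sin^2\frac{\theta-t}{2}\bigr)+L_2(\theta,t)$ with $L_1,L_2$ analytic, together with the accompanying analytic-regularity theory. The second, more direct, is a PDE argument. The eigenpair defines a solution of the transmission problem that is real-analytic up to the analytic interface from each side. Moreover, $\phi,\psi$ are normal-derivative jumps of $\mathcal S[\phi],\mathcal S[\psi]$, whose continuations to the opposite side solve Helmholtz Dirichlet problems with analytic data, so they are analytic up to $\partial D$.

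A smaller inaccuracy sits in Step~1. The arc-length weight $\|x'\|$ enters both the kernel and the pairing $\langle\cdot,\cdot\rangle_{\partial D_i}$, and $S_\omega$ occupies both columns of the first block row. Hence the Galerkin projection does not commute with the principal part of $\mathcal A$. The regular-approximation hypothesis of Karma's theory is more safely checked after substituting $\chi=\phi-\psi$. This substitution preserves $\mathcal X_F\times\mathcal X_F$ and makes the system block lower-triangular, with diagonal blocks $S_\omega$ (which satisfies a G{\aa}rding inequality on $H^{-1/2}(\partial D)$) and $\mathcal R(\omega,\delta)=-\frac{1+\delta}{2}I+(1-\delta)K_\omega'$ (identity plus compact).
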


\begin{remark}[Non-smooth boundaries]
The Fourier--Galerkin approximation and its exponential convergence rely on the smoothness of $\partial D_j$. For boundaries with corners or edges, the layer potential densities develop singularities and trigonometric polynomials are no longer an efficient approximation space. In that setting, one would instead employ graded-mesh or $hp$-boundary element methods that adapt the approximation space to the corner singularities~\cite{saranen2002periodic}, recovering exponential convergence at the cost of a more involved basis construction. The development of such methods for the subwavelength resonance problem is left for future work.
\end{remark}

\begin{figure}
    \centering
    \begin{subfigure}[b]{0.48\linewidth}
        \centering
        \includegraphics[width=\linewidth]{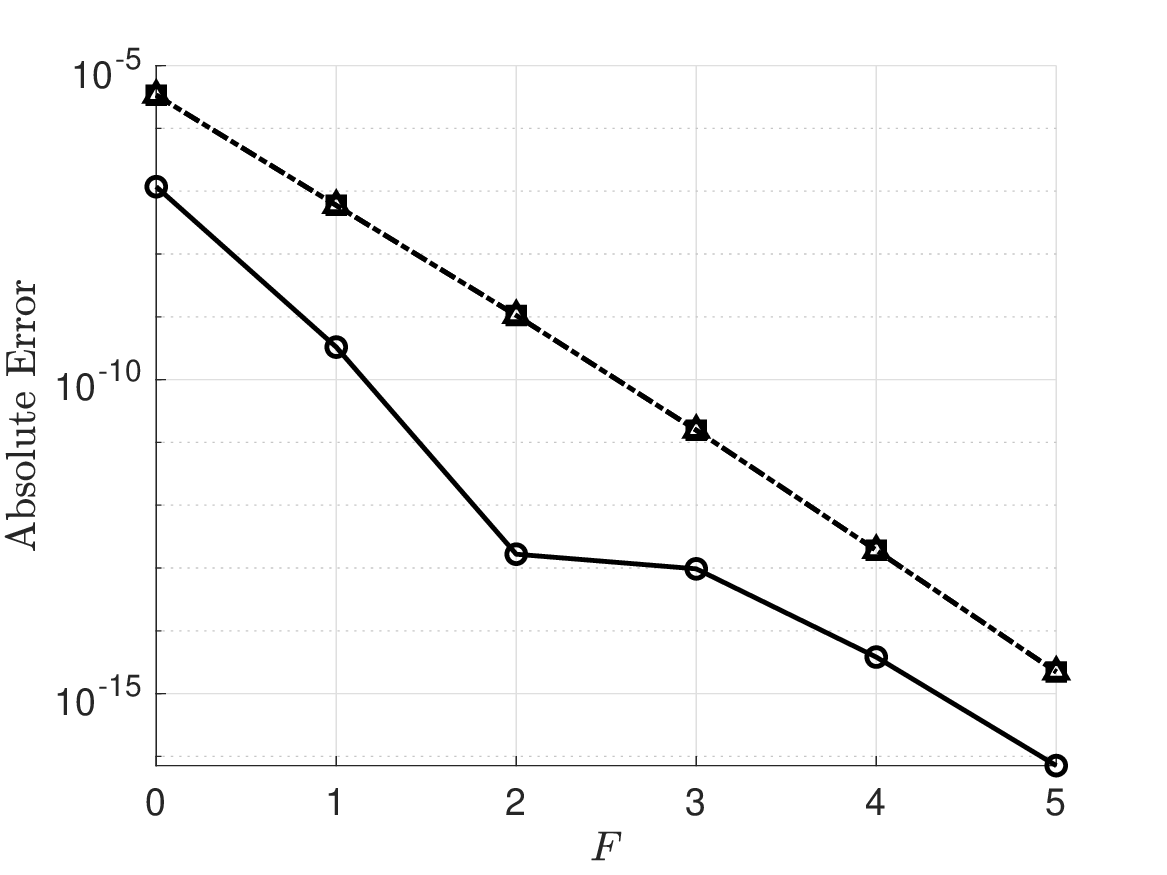}
        \caption{Resonant frequency error $|\omega_F - \omega_{F_{\mathrm{ref}}}|$.}
    \end{subfigure}
    \hfill
    \begin{subfigure}[b]{0.48\linewidth}
        \centering
        \includegraphics[width=\linewidth]{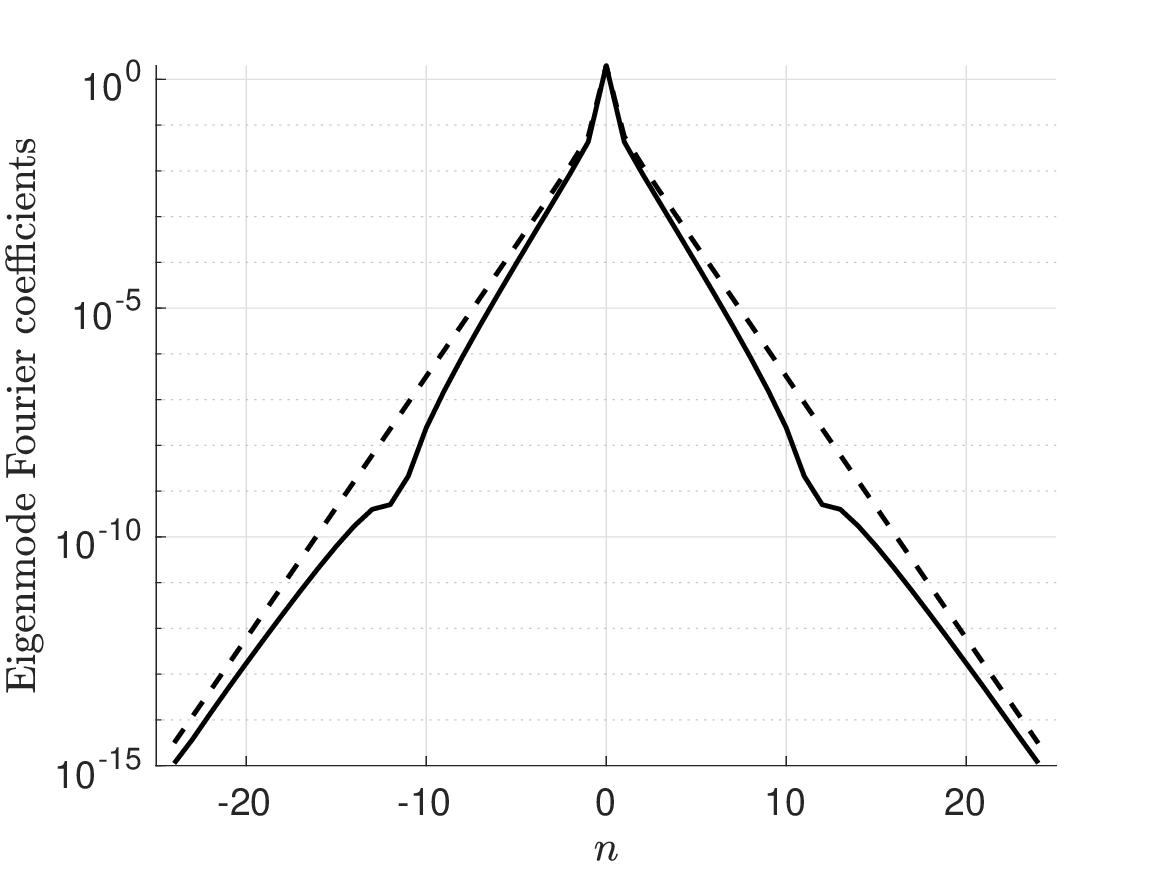}
        \caption{Eigenmode Fourier coefficient decay.}
    \end{subfigure}
    \caption{Exponential convergence in the truncation parameter $F$ for $N = 2$
    circular resonators, confirming Proposition~\ref{prop:smooth_convergence}. The reference solution $(\omega_{\mathrm{ref}},u_{\mathrm{ref}})$ is computed at $F_{\mathrm{ref}} = 6$. Left: resonant frequency error $\abs{\omega_F - \omega_\mathrm{ref}}$. Right: $L^2(\partial D)$ eigenmode error
    $\abs{u_F -u_{\mathrm{ref}}}$, with eigenmodes normalised to unit $L^2(\partial D)$ norm.}
    \label{fig:convergence_in_F}
\end{figure}

\section{Galerkin matrix asymptotic analysis}
\label{sec:asymp}

Despite exponential convergence in the truncation parameter $F$, assembling the full Fourier--Galerkin matrix becomes numerically and computationally prohibitive as the number of resonators increases. Large configurations suffer from ill-conditioning in resonance capture, and non-circular geometries prevent symmetry exploitation. 

To overcome these bottlenecks, we derive two reduced formulations via asymptotic analysis of~\eqref{eq:matrixA} in the limit $\delta \to 0$. These reductions preserve spectral convergence in $F$ while reducing problem dimension and computational cost, and we show how the reduced model yields both accurate initial guesses for resonance refinement and an analytically tractable framework for classifying subwavelength branches.

In Theorem~\ref{thm:reduction}, we present the first effective operator formulation with leading-order error $O(\delta\omega^2\log\omega + \omega^4\log^2\omega)$. The resonance branches divide into two asymptotic regimes: the logarithmic branch with $\omega_{\log} = O(\sqrt{\delta/|\log\delta|})$ and approximation error $O((\delta/|\log\delta|)^{3/2})$, and regular branches with $\omega_j = O(\sqrt{\delta})$ and approximation error $O(\delta^{3/2}|\log\delta|^2)$. This operator admits discretization via truncation parameter $F$ and exhibits spectral convergence in $F$.

Setting $F = 0$ yields the effective matrix formulation of Proposition~\ref{prop:effective_F0}, which reduces the resonance problem to an $N \times N$ eigenvalue problem. Theorem~\ref{thm:branch_scaling} then establishes the precise branch classification: exactly one logarithmically-scaled branch and $N-1$ regular branches exist, with resonance approximation errors scaling as $O(\sqrt{\delta}/\log \delta)$ and $O(\sqrt{\delta/\log \delta})$ respectively.

The benefits of the reduced model are two-fold. First, it supplies high-fidelity initial guesses for the full boundary element method (BEM) solver, enabling robust convergence of Newton refinement. Second, it offers an analytically tractable framework for understanding resonance interaction and mode coupling in subwavelength systems, facilitating both theoretical analysis and predictive insight into the structure of the dispersion landscape.

\subsection{Asymptotic analysis of the integral operators}

First, we detail some asymptotic results for the operators involved in~\eqref{eq:matrixA}. We begin by introducing the operators and kernels relevant to the first row of the operator-valued matrix presented in~\eqref{eq:matrixA}.

For $\mathrm{Im}(\omega) \geq 0$, define the Helmholtz single-layer operator 
\begin{equation}
\label{eq:defS}
(S_\omega \varphi)(x) = \int_{\partial D} G_\omega(x-y)\,\varphi(y)\,d\sigma(y),
\end{equation}
mapping $H^{-1/2}(\partial D)$ to $H^{1/2}(\partial D)$, where $G_\omega$ is the outgoing Green's function of the Helmholtz equation in $\mathbb{R}^2$. We also define the kernels
\begin{equation}
\label{eq:defGk}
\begin{aligned}
    G_0(x,y) &= -\frac{1}{2\pi}\log|x-y|, 
    G_1(x,y) = \frac{|x-y|^2}{8\pi}, \\
    G_2(x,y) &= G_1(x,y)\log\frac{|x-y|}{2} + c_{\mathrm{sl}} |x-y|^2.
\end{aligned}
\end{equation}
The associated operators $S_k: H^{-1/2}(\partial D)\to H^{1/2}(\partial D)$ are defined as boundary integral operators with kernels $G_k$, namely
\begin{equation}
\label{eq:defSk}
(S_k \varphi)(x) = \int_{\partial D} G_k(x,y)\,\varphi(y)\,d\sigma(y), 
\qquad x \in \partial D,\quad k=0,1,2.
\end{equation}
Here, $d\sigma$ denotes the surface measure on $\partial D$. 

The operator $S_0$ corresponds to the classical Laplace single-layer operator, while $S_1$ and $S_2$ are higher-order correction operators arising from the asymptotic expansion of the Helmholtz Green's function. 
Finally, we introduce the rank-one averaging operator
\begin{equation}
\label{eq:defI}
\mathcal{I}[\varphi] = \int_{\partial D} \varphi(y)\, d\sigma(y). 
\end{equation}
and the frequency-dependent scalar
\begin{equation}
\label{eq:deftau}
\tau_\omega = -\frac{\log\omega}{2\pi} + \frac{i}{4} - \frac{\gamma-\log 2}{2\pi},
\end{equation}
where $\gamma$ is the Euler-Mascheroni constant. 
The following expansions follow from the small-argument asymptotics of the Hankel function; see, for example, \cite{ammari2003high} and \cite[Sec.~3.1]{colton_kress}.

\begin{lemma}
\label{lem:expGSK_sobolev}
The operator $S_\omega$ defined in~\eqref{eq:defS} admits the following asymptotic expansion in the operator norm 
$\|\cdot\|_{\mathcal{L}(H^{-1/2}(\partial D), H^{1/2}(\partial D))}$ as $\omega \to 0$:
\begin{equation}
\label{eq:expS_sobolev}
S_\omega -  \tau_\omega \mathcal{I} = S_0  + \omega^2\log\omega\,S_1 + \omega^2 S_2 + O(\omega^4 \log \omega).
\end{equation}
\end{lemma}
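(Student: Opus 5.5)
The plan is to expand the kernel pointwise and then lift the kernel estimate to an operator-norm estimate. Write $r=|x-y|$ and $G_\omega(x-y)=\frac{i}{4}H_0^{(1)}(\omega r)=\frac{i}{4}\bigl(J_0(\omega r)+iY_0(\omega r)\bigr)$, and use the convergent series
\[
J_0(z)=\sum_{k\ge0}\frac{(-1)^k}{(k!)^2}\Bigl(\frac z2\Bigr)^{2k},\qquad
Y_0(z)=\frac{2}{\pi}\Bigl(\log\frac z2+\gamma\Bigr)J_0(z)+\frac{2}{\pi}\sum_{k\ge1}(-1)^{k+1}H_k\frac{(z/2)^{2k}}{(k!)^2},
\]
where $H_k$ is the harmonic number. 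The $k=0$ terms give
\[
\frac i4-\frac{1}{2\pi}\bigl(\log\omega+\log r-\log2+\gamma\bigr)=\tau_\omega-\frac{1}{2\pi}\log r .
\]
The constant $\tau_\omega$ integrates against $\varphi$ to $\tau_\omega\mathcal I[\varphi]$, and $-\frac1{2\pi}\log r$ is $G_0$, so this recovers $S_0$. The $k=1$ terms contribute $-\frac{(\omega r)^2}{4}$ times the same bracket, plus the $H_1$ term, and these produce three pieces. The first is $\frac{\omega^2\log\omega\, r^2}{8\pi}=\omega^2\log\omega\,G_1$. The second is $\omega^2 G_1\log\frac r2$. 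The third is $\omega^2 r^2$ times a constant assembled from $\gamma$, $i/4$ and $H_1$, which defines $c_{\mathrm{sl}}$; I would identify it explicitly by collecting these terms. This yields $\omega^2 S_2$.

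Next I would control the remainder $R_\omega(x,y)$, which collects the terms with $k\ge2$. On a bounded boundary we have $r\le \operatorname{diam}(D)$, so for $|\omega|\le 1$ the tail is bounded by $C|\omega|^4 r^4\bigl(1+|\log\omega|+|\log r|\bigr)$, uniformly in $x,y$. The series converge absolutely and uniformly for bounded $\omega r$, so the tail estimate follows from summing a geometric-type bound. In addition, $r^4\log r$ is a $C^{3}$ function of $(x,y)$ away from nothing singular. Its derivatives, and the derivatives of the entire remainder, are bounded by $C|\omega|^4|\log\omega|$ after differentiating termwise. This matters because the $\omega$-dependence enters only through the powers $(\omega r)^{2k}$ and through $\log\omega$.

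To pass to the operator norm, I would note that $\mathcal L(H^{-1/2},H^{1/2})$ is controlled by the $C^{1}(\partial D\times\partial D)$ norm of the kernel, or more generously by any sufficiently smooth norm. The argument is to pair with $\varphi\in H^{-1/2}$ via duality against $H^{1/2}\subset L^2$ and differentiate once in $x$. A kernel $k\in C^{2}(\partial D\times\partial D)$ satisfies $\|K\|_{H^{-1/2}\to H^{1/2}}\lesssim\|k\|_{C^2}$ by interpolating the bounds $H^{-1}\to H^{1}$ and $L^2\to L^2$. Applied to $R_\omega$, this gives the $O(\omega^4\log\omega)$ error. Since $S_1$ and $S_2$ also have $C^2$ kernels, they are bounded operators in this setting.

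The main obstacle is the smoothness bookkeeping for the remainder near the diagonal. The term $r^4\log r$ is only finitely smooth, and on merely Lipschitz boundaries the $C^2$-kernel argument needs care. I would state the proof for smooth (or $C^{2}$) parametrized boundaries, as in Proposition~\ref{prop:smooth_convergence}, where it works directly. For the Lipschitz case I would instead use the mapping bound $\|K\|_{H^{-1/2}\to H^{1/2}}\lesssim\|k\|_{W^{1,\infty}}$, which is valid for Lipschitz curves because $H^{1/2}$ there is defined via $W^{1,\infty}$-stable charts.
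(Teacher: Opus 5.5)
Your proposal is correct and takes the same route as the paper, which simply appeals to the small-argument expansion of $H_0^{(1)}$. Your $k=1$ bookkeeping gives $c_{\mathrm{sl}}=\frac{\gamma-1}{8\pi}-\frac{i}{16}$, consistent with the paper's $c_\gamma$, and your kernel-to-operator-norm step supplies what the paper leaves implicit. One slip: $G_2$ contains $r^2\log r$, whose second derivatives are log-singular, so $S_2$ does not have a $C^2$ kernel. Its boundedness (and your Lipschitz remark) should instead rest on the $C^1$/Lipschitz criterion: a Lipschitz kernel pulled back to $\mathbb{T}^2$ lies in $H^1(\mathbb{T}^2)\subset H^{1/2}(\mathbb{T})\otimes H^{1/2}(\mathbb{T})$, and is therefore Hilbert--Schmidt from $H^{-1/2}$ to $H^{1/2}$. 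The interpolation step is also unnecessary, since $H^{-1/2}\hookrightarrow H^{-1}$ and $H^{1}\hookrightarrow H^{1/2}$.
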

It is well known that $S_0$ is not invertible. For example the constant function is in its kernel for unit circular domain $\mathbb{T}$:
\begin{equation}
S_0[\textbf{1}](x) = -\frac{1}{2\pi} \int_{\mathbb{T}} \log|x-y| \, ds_y = -\log(1) = 0, \quad \forall x \in  \mathbb{T}.
\end{equation}

However, we have the following isomorphism argument in the subwavelength regime.
\begin{lemma}\label{lem:S_inv}
For sufficiently small $|\omega| > 0$, the single-layer operator 
$S_\omega: H^{-1/2}(\partial D) \to H^{1/2}(\partial D)$ is an isomorphism.
\end{lemma}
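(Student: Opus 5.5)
Our plan is a Fredholm-plus-injectivity argument that uses the expansion of Lemma~\ref{lem:expGSK_sobolev} to handle the logarithmic rank-one part. Write
\[
S_\omega = \widetilde S_\omega + R_\omega,\qquad \widetilde S_\omega := S_0 + \tau_\omega \mathcal I,\qquad \|R_\omega\|_{\mathcal L(H^{-1/2},H^{1/2})} = O(|\omega|^2|\log\omega|).
\]
Here $\mathcal I$ is read componentwise: it is the operator $\varphi\mapsto \bigl(\int_{\partial D_i}\varphi_i\,d\sigma\bigr)\chi_{\partial D_i}$ in the multi-resonator setting. Since $\|R_\omega\|\to0$, it suffices to show that $\widetilde S_\omega$ is invertible with $\|\widetilde S_\omega^{-1}\|$ bounded by a constant times $|\omega|^{-2}|\log\omega|^{-1}$ to a power strictly less than one. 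We will in fact prove $\|\widetilde S_\omega^{-1}\| = O(1)$. The Neumann series $S_\omega^{-1} = (I + \widetilde S_\omega^{-1}R_\omega)^{-1}\widetilde S_\omega^{-1}$ then gives the claim.

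\textbf{Step 1 (Fredholm structure of $S_0$).} The Laplace single-layer operator $S_0:H^{-1/2}(\partial D)\to H^{1/2}(\partial D)$ is Fredholm of index zero. It is a compact perturbation of a coercive operator, since its principal symbol is that of $\tfrac12|\xi|^{-1}$. The rank-one or rank-$N$ terms $\tau_\omega\mathcal I$ are compact. Hence $\widetilde S_\omega$ is Fredholm of index zero, and invertibility reduces to injectivity.

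\textbf{Step 2 (injectivity of $\widetilde S_\omega$).} Suppose $S_0\varphi + \tau_\omega \mathcal I\varphi = 0$. We use the classical decomposition of $H^{-1/2}(\partial D)$ into $H^{-1/2}_0$, the densities with zero mean on each $\partial D_i$, together with the span of the equilibrium densities. On $H^{-1/2}_0$ the operator $S_0$ is coercive in the sense that $-\langle S_0\varphi,\varphi\rangle \ge c\|\varphi\|^2_{H^{-1/2}}$, with the sign convention for $G_0 = -\tfrac1{2\pi}\log$. For $\varphi$ in the kernel, pairing with suitable test functions and using that $\mathcal I\varphi$ is piecewise constant yields a finite-dimensional system for the means $q_i = \int_{\partial D_i}\varphi_i$. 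This system has the form
\[
\bigl(C_0 + \tau_\omega \mathbf 1\mathbf 1^\top\bigr)q = 0 \quad\text{or equivalently}\quad (\tau_\omega I + M)q=0,
\]
where $M$ is a fixed real matrix built from $S_0$ restricted to the equilibrium densities (a logarithmic-capacity type matrix). The key fact is that $\tau_\omega$ has $|\tau_\omega|\sim \tfrac{1}{2\pi}|\log\omega|\to\infty$ and constant imaginary part $\tfrac14$. It therefore avoids the finitely many eigenvalues of $-M$ for $|\omega|$ small, which forces $q=0$. With $q=0$ the density $\varphi$ lies in $H^{-1/2}_0$, and coercivity gives $\varphi=0$.

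\textbf{Step 3 (uniform bound) and main obstacle.} The hardest part is making the inverse bound uniform in $\omega$. The $\tau_\omega$ term blows up, so $\widetilde S_\omega$ is not a small perturbation of a fixed operator. We plan to write $\widetilde S_\omega^{-1}$ explicitly through the block decomposition above. On the zero-mean part it is governed by the inverse of $S_0$, which is $O(1)$. On the mean part it involves $(\tau_\omega I + M)^{-1}$, which is $O(1/|\log\omega|)$, and its action on the data produces only bounded contributions. This should give $\sup_{|\omega|<\omega_0}\|\widetilde S_\omega^{-1}\|<\infty$, and the Neumann series argument then finishes the proof.

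If the explicit decomposition becomes messy for general $N$ and geometry, we fall back on a contradiction argument. Take $\omega_k\to0$ and unit-norm $\varphi_k$ with $\widetilde S_{\omega_k}\varphi_k\to0$. Then $\tau_{\omega_k}\mathcal I\varphi_k$ stays bounded, so $\mathcal I\varphi_k\to0$. Coercivity on the zero-mean part then forces $\varphi_k\to0$, a contradiction.
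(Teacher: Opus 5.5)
Your architecture (Fredholm index zero, injectivity, a uniform bound on $\widetilde S_\omega^{-1}$, Neumann series) is sound, but the multi-resonator bookkeeping breaks it for $N\ge 2$.

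\textbf{The componentwise reading of $\mathcal I$ is wrong.} The constant $\tau_\omega$ in the small-argument expansion of $\tfrac{i}{4}H_0^{(1)}(\omega|x-y|)$ appears for \emph{every} pair $(x,y)\in\partial D\times\partial D$. This includes $x\in\partial D_i$, $y\in\partial D_j$ with $i\neq j$. So $\mathcal I\varphi=\bigl(\int_{\partial D}\varphi\,d\sigma\bigr)\mathbf 1_{\partial D}$ is the global rank-one operator of \eqref{eq:defI}, not a componentwise one. With your reading, $R_\omega=S_\omega-S_0-\tau_\omega\mathcal I$ contains $\tau_\omega$ times the map $\varphi\mapsto\sum_i\sum_{j\neq i}\bigl(\int_{\partial D_j}\varphi_j\,d\sigma\bigr)\chi_{\partial D_i}$. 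Its norm grows like $|\log\omega|$, so $\|R_\omega\|\not\to 0$ and the Neumann series step fails.

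\textbf{Step 2 fails with the correct $\mathcal I$.} Your Step 2 then yields $(C_0+\tau_\omega\mathbf 1\mathbf 1^\top)q=0$, which is \emph{not} equivalent to $(\tau_\omega I+M)q=0$. A rank-one term, however large its coefficient, only controls the $\mathbf 1$ direction. The observation that $\tau_\omega$ avoids the eigenvalues of $-M$ says nothing about the other $N-1$ directions of $q$.

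\textbf{Missing ingredient.} You need positivity of $S_0$ on densities with zero \emph{total} charge, $\int_{\partial D}\varphi\,d\sigma=0$, not merely zero mean on each component. For such $\varphi$ the potential $\mathcal S_0\varphi$ decays at infinity, and $\langle S_0\varphi,\varphi\rangle=\int_{\mathbb R^2}|\nabla\mathcal S_0\varphi|^2\,dx\ge c\|\varphi\|_{H^{-1/2}}^2$. Note the plus sign for $G_0=-\tfrac1{2\pi}\log|x-y|$, not the minus sign you wrote. This makes $C_0$ definite on $\mathbf 1^\perp\subset\mathbb C^N$. A Schur complement with respect to $\mathbf 1^\perp\oplus\operatorname{span}\{\mathbf 1\}$ then shows that $C_0+\tau_\omega\mathbf 1\mathbf 1^\top$ is invertible for small $|\omega|$, with uniformly bounded inverse.

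\textbf{Relation to the paper.} Once you use this fact, the per-component decomposition is superfluous. Splitting $H^{-1/2}(\partial D)$ directly into zero-total-mean densities plus $\operatorname{span}\{\mathbf 1\}$ is exactly the paper's proof, where $S^{00}_\omega\sim\tau_\omega|\partial D|$ and the Schur complement $\Sigma_\omega\to P_\perp S_0|_{H_\perp}$.

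Your fallback contradiction argument is repaired by the same change. With the global $\mathcal I$, $\tau_{\omega_k}\mathcal I\varphi_k$ is a bounded constant that pairs to zero against zero-total-mean densities, so coercivity on that subspace forces $\varphi_k\to 0$. For $N=1$ the two readings of $\mathcal I$ coincide, and your argument goes through as written.
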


\begin{proof}
Decompose $H^{-1/2}(\partial D) = H_\perp(\partial D) \oplus \mathrm{span}\{\mathbf{1}\}$ and $H^{1/2}(\partial D) = H_\perp^{1/2}(\partial D) \oplus \mathrm{span}\{\mathbf{1}\}$, where $H_\perp(\partial D) := \{\varphi \in H^{-1/2}(\partial D) : \langle\varphi,\mathbf{1}\rangle = 0\}$, and write $S_\omega$ in block form using the respective projections:
\begin{equation}
S_\omega = \begin{pmatrix} P_\perp S_\omega|_{H_\perp} & P_\perp S_\omega|_{\mathrm{span}\{\mathbf{1}\}} \\ P_0 S_\omega|_{H_\perp} & P_0 S_\omega|_{\mathrm{span}\{\mathbf{1}\}} \end{pmatrix} =: \begin{pmatrix} S_\omega^{\perp\perp} & S_\omega^{\perp 0} \\ S_\omega^{0\perp} & S_\omega^{00} \end{pmatrix}.
\end{equation}
Using the expansion $S_\omega = \tau_\omega\mathcal{I} + S_0 + O(\omega^2\log\omega)$ from Lemma~\ref{lem:expGSK_sobolev}, and noting that $\mathcal{I}[\varphi] = 0$ for $\varphi\in H_\perp$ while $\mathcal{I}[\mathbf{1}] = |\partial D|\mathbf{1}$, we identify the blocks as $S_\omega^{\perp\perp} = P_\perp S_0|_{H_\perp} + O(\omega^2\log\omega)$, $S_\omega^{00} = \tau_\omega|\partial D| + P_0 S_0[\mathbf{1}] + O(\omega^2\log\omega)$, and $S_\omega^{\perp 0}, S_\omega^{0\perp} = O(1)$. 

Since $\tau_\omega\to\infty$ as $\omega\to 0$, $S_\omega^{00}$ is invertible with $(S_\omega^{00})^{-1} = O(1/\tau_\omega)$. Defining the Schur complement $\Sigma_\omega := S_\omega^{\perp\perp} - S_\omega^{\perp 0}(S_\omega^{00})^{-1}S_\omega^{0\perp}$, we have
\begin{equation*}
\Sigma_\omega = P_\perp S_0|_{H_\perp} + O(\omega^2\log\omega) + O(1/\tau_\omega) \to P_\perp S_0|_{H_\perp}
\end{equation*}
as $\omega\to 0$. Since $P_\perp S_0|_{H_\perp}$ is an isomorphism on $H_\perp$ by coercivity~\cite{mclean2000strongly}, a Neumann series argument gives invertibility of $\Sigma_\omega$ for sufficiently small $\omega$. The block factorisation
\begin{equation}
S_\omega = \begin{pmatrix} I & S_\omega^{\perp 0}(S_\omega^{00})^{-1} \\ 0 & I \end{pmatrix}\begin{pmatrix} \Sigma_\omega & 0 \\ S_\omega^{0\perp} & S_\omega^{00} \end{pmatrix}
\end{equation}
then shows that $S_\omega$ is an isomorphism if and only if both $\Sigma_\omega$ and $S_\omega^{00}$ are invertible, which holds for all sufficiently small $|\omega| > 0$.
\end{proof}

Lemma~\ref{lem:S_inv} establishes the invertibility of $S_\omega$ for small $|\omega|$, which reduces the transmission system~\eqref{eq:block_system} to the single equation $\phi = \psi$ which will be shown in Theorem~\ref{thm:reduction}. It remains to analyze the second row of~\eqref{eq:matrixA}, which governs the reduced operator. To this end, we recall the asymptotic expansions of $K_\omega'$ as $\omega \to 0$. The operator $K_\omega' : L^2(\partial D) \to L^2(\partial D)$ is defined by
\begin{equation*}
(K_\omega'[\varphi])(x) = \int_{\partial D}\partial_{\nu_x}G_\omega(x,y)\varphi(y)\,d\sigma(y), \qquad x\in\partial D,
\end{equation*}
where $G_\omega$ is the outgoing Green's function for the Helmholtz equation. Define operators $K_0'$, $K_1'$, $K_2'$ whose kernels are
\begin{equation}
\label{eq:kernels}
\begin{split}
& \partial_{\nu_x}G_0(x,y) = -\frac{1}{2\pi}\frac{(x-y)\cdot\nu_x}{|x-y|^2},
\qquad
\partial_{\nu_x}G_1(x,y) = \frac{(x-y)\cdot\nu_x}{4\pi},
\\ 
& \partial_{\nu_x}G_2(x,y)
= \frac{(x-y)\cdot\nu_x}{4\pi}\log\frac{|x-y|}{2}
+ c_\gamma(x-y)\cdot\nu_x,\ \textrm{with}\ c_\gamma = \frac{\gamma-\tfrac{1}{2}}{4\pi} - \frac{i}{8}.
\end{split}
\end{equation}
The low-frequency expansion of $K_\omega'$ from the Hankel asymptotics reads (see~\cite{colton_kress, ammari2024functional})
\begin{equation}
\label{eq:expK}
K_\omega' = K_0'
+ \omega^2\log\omega\,K_1' + \omega^2 K_2'
+ O\!\bigl(\omega^4(\log\omega)^2\bigr).
\end{equation}
\begin{theorem}\label{thm:reduction}
For sufficiently small $|\omega| > 0$, the resonances of~\eqref{eq:matrixA} are 
characterised by $\ker(\mathcal{R}(\omega,\delta)) \neq \{0\}$, where the reduced 
operator $\mathcal{R}(\omega, \delta)$ admits the asymptotic expansion:
\begin{equation}
\label{eq:R_expansion}
\mathcal{R}(\omega, \delta) = -\tfrac{1+\delta}{2}I + (1-\delta)K_0'
+ \omega^2 \log \omega \, K_1' + \omega^2 K_2'
+ O(\delta \omega^2 \log \omega + \omega^4 \log^2 \omega).
\end{equation}
The resonance branches divide into two asymptotic regimes:
\begin{enumerate}
    \item Logarithmic branches with $\omega_{\log} = O(\sqrt{\delta/|\log\delta|})$, arising from the dominant balance $\omega^2|\log\omega|\sim\delta$, with approximation error $O\!\left((\delta/|\log\delta|)^{3/2}\right)$.
    \item Regular branches with $\omega_j = O(\sqrt{\delta})$, arising from solutions in $\ker K_1'$ where the dominant balance $\omega^2 K_2'\sim\delta$ governs, with approximation error $O(\delta^{3/2}|\log\delta|^2)$.
\end{enumerate}
\end{theorem}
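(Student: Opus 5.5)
The plan is to eliminate the first row of \eqref{eq:block_system} using Lemma~\ref{lem:S_inv}, and then expand the resulting second-row operator with \eqref{eq:expK}.

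\emph{Step 1: reduction.} For $0<|\omega|$ small, $S_\omega$ is an isomorphism by Lemma~\ref{lem:S_inv}. Hence the first equation $S_\omega(\phi-\psi)=0$ forces $\phi=\psi$. Conversely, any $\phi$ with $\phi=\psi$ satisfies the first row. Substituting into the second row gives the single equation
\begin{equation*}
\mathcal R(\omega,\delta)\phi := \Bigl(-\tfrac12 I + K_\omega'\Bigr)\phi-\delta\Bigl(\tfrac12 I+K_\omega'\Bigr)\phi=0 .
\end{equation*}
This shows that $\ker\mathcal A(\omega,\delta)\neq\{0\}$ if and only if $\ker\mathcal R(\omega,\delta)\neq\{0\}$, with the map $\phi\mapsto(\phi,\phi)$ between kernels.

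\emph{Step 2: expansion.} Simplifying, $\mathcal R=-\tfrac{1+\delta}{2}I+(1-\delta)K_\omega'$. Inserting \eqref{eq:expK} gives
\begin{equation*}
(1-\delta)K_\omega'=(1-\delta)K_0'+(1-\delta)\bigl(\omega^2\log\omega\,K_1'+\omega^2K_2'\bigr)+O(\omega^4\log^2\omega).
\end{equation*}
The cross terms $\delta\omega^2\log\omega\,K_1'$ and $\delta\omega^2K_2'$ are absorbed into $O(\delta\omega^2\log\omega)$, since $|\log\omega|\ge 1$ for small $\omega$. This yields \eqref{eq:R_expansion}. The bound should be taken in $\mathcal L(L^2(\partial D))$, which requires the remainder in \eqref{eq:expK} to be uniform in that norm. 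For smooth boundaries this is standard, because the kernels are smooth apart from $|x-y|^2\log|x-y|$-type terms.

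\emph{Step 3: branch classification.} I would do a perturbation analysis about $\delta=\omega=0$. At that point $\mathcal R(0,0)=-\tfrac12I+K_0'$, whose kernel is $N$-dimensional and spanned by the equilibrium densities $\psi_j$ with $S_0$-type normalisation. I would project onto this kernel with a Lyapunov--Schmidt reduction, using that $-\tfrac12 I+K_0'$ is Fredholm of index zero. This gives an $N\times N$ matrix equation
\begin{equation*}
\delta\,\langle (-\tfrac12 I- K_0')\psi_j,\cdot\rangle\approx -\omega^2\log\omega\,\langle K_1'\psi_j,\cdot\rangle-\omega^2\langle K_2'\psi_j,\cdot\rangle .
\end{equation*}
The identity $-\tfrac12 I-K_0'=-I$ on the kernel simplifies the left side to $-\delta$ times the identity on the kernel. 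The kernel of $K_1'$ then sorts the branches.

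Since $\partial_{\nu_x}G_1$ is affine in $y$, the operator $K_1'$ has finite rank. On the projected space it acts through the total charge $\int\varphi$, giving rank one in the multi-resonator setting. The term $\int_{\partial D_i}K_1'\varphi=\tfrac{1}{4\pi}\int_{D_i}2\,dx\int\varphi$ comes from the divergence theorem. On the charge-carrying direction the balance is $\omega^2|\log\omega|\sim\delta$. Inverting gives $\omega_{\log}^2\sim\delta/|\log\delta|$, and the neglected $O(\delta\omega^2\log\omega+\omega^4\log^2\omega)$ terms produce the stated $(\delta/|\log\delta|)^{3/2}$ error through a Rouché / implicit-function estimate on the scalar determinant. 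On the $(N-1)$-dimensional charge-free complement, $K_1'$ vanishes after projection, so $\omega^2K_2'\sim\delta$ gives $\omega_j\sim\sqrt\delta$. The remainder $\omega^4\log^2\omega$ evaluated at $\omega^2\sim\delta$, divided by the $\omega$-derivative scale $\sqrt\delta$, yields $O(\delta^{3/2}\log^2\delta)$.

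\emph{Main obstacle.} I expect the hardest step to be making the branch counting rigorous. This means showing that the projected matrix is invertible in the appropriate scaled variables, so that Gohberg--Sigal / Rouché theory applies in each regime. It also means controlling the mixing between the log direction and the regular block, which enters at relative order $1/|\log\delta|$. I would first try rescaling $\omega=\sqrt{\delta/|\log\delta|}\,z$ and $\omega=\sqrt\delta\,z$ separately. In each case I would apply the generalized argument principle to the reduced determinant, treating the off-diagonal coupling as a perturbation.
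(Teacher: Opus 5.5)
Your Steps 1 and 2 are the paper's argument: invert $S_\omega$ via Lemma~\ref{lem:S_inv} to get $\phi=\psi$, then insert \eqref{eq:expK} and absorb the $\delta\omega^2$ cross terms.

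For the branch analysis you take a different route from the paper. You perform a Lyapunov--Schmidt reduction onto $\ker(-\tfrac12 I+K_0')$, tested against the functions $\chi_{\partial D_i}$. In the resulting $N\times N$ problem, $K_1'$ enters only through the rank-one map $\psi\mapsto\bigl(\tfrac{|D_i|}{2\pi}\int_{\partial D}\psi\bigr)_i$; its dipole part vanishes because $\int_{\partial D_i}\nu\,d\sigma=0$. The paper instead applies dominant balance directly to the operator $\mathcal R$. It attributes the regular branches to the literal kernel of $K_1'$, i.e.\ densities with zero charge and zero first moment. Your reduction gives the paper's ``derivative bounded away from zero'' step an actual meaning, as a statement about a scalar determinant. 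Your ``charge-free after projection'' is also the accurate description of the regular subspace, since resonant densities need not lie in $\ker K_1'$ itself. One small fix: normalise the kernel basis by $\int_{\partial D_i}\psi_j=\delta_{ij}$ rather than through $S_0^{-1}$, which may not exist in two dimensions.

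\textbf{The genuine gap is the log-branch error in item~1.} You assert that the neglected $O(\delta\omega^2\log\omega+\omega^4\log^2\omega)$ terms give the stated $O((\delta/|\log\delta|)^{3/2})$ error, but the bookkeeping does not support this.
\begin{itemize}
\item At $|\omega|^2\sim\delta/|\log\delta|$ one has $|\log\omega|\sim\tfrac12|\log\delta|$. Hence $\delta|\omega^2\log\omega|\sim\delta^2$ and $|\omega^4\log^2\omega|\sim\delta^2$, not $\delta^2/|\log\delta|$.
\item Dividing by $|\partial_{\omega^2}(\omega^2\log\omega)|\sim|\log\delta|$ gives an error $O(\delta^2/|\log\delta|)$ in $\omega^2$.
\item That is $O(\delta^{3/2}|\log\delta|^{-1/2})$ in $\omega$, a factor $|\log\delta|$ worse than claimed.
\end{itemize}
Sharper estimates cannot repair this. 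The dropped term $-\delta\omega^2\log\omega\,K_1'$ acts on the charge-carrying direction with the nonzero coefficient $\tfrac{|D_i|}{2\pi}\int_{\partial D}\psi$. In your reduced scalar equation the true problem reads $(1-\delta)\bigl(\omega^2\log\omega\,\mu+\omega^2\alpha\bigr)=\delta+\dots$, so truncating replaces $\delta/(1-\delta)$ by $\delta$. That shifts the root by exactly this amount. The remaining $O(\delta^2)$ Lyapunov--Schmidt cross terms vanish once the complement of the kernel is taken to be the range.

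The paper's proof reaches the stated rate only by asserting that the residual is $O(\delta^2/|\log\delta|)$ at this scaling, which is the same slip. With your approach you can prove item~1 only with the rate $O(\delta^{3/2}|\log\delta|^{-1/2})$. The regular-branch bound $O(\delta^{3/2}|\log\delta|^2)$ in item~2 is consistent with your bookkeeping, including the Schur-complement mixing you flag.
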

\begin{proof}
The first row of $\mathcal{A}(\omega,\delta)(\phi,\psi)^\top = 0$ yields $S_\omega(\phi-\psi) = 0$. By Lemma~\ref{lem:S_inv}, $S_\omega$ is invertible for sufficiently small $|\omega|>0$, hence $\phi = \psi$. Substituting into the second row gives
\begin{equation}
\label{eq:resonance_K}
\left(-\tfrac{1}{2}I + K_\omega'\right)\phi - \delta\left(\tfrac{1}{2}I + K_\omega'\right)\phi = 0,
\end{equation}
which we write as $\mathcal{R}(\omega,\delta)\phi = 0$ with $\mathcal{R}(\omega,\delta) = -\tfrac{1+\delta}{2}I + (1-\delta)K_\omega'$. Conversely, if $\mathcal{R}(\omega,\delta)\phi = 0$ then setting $\psi = \phi$ satisfies both rows of $\mathcal{A}(\omega,\delta)(\phi,\psi)^\top = 0$, so the resonance condition $\ker\mathcal{A}(\omega,\delta)\neq\{0\}$ is equivalent to $\ker\mathcal{R}(\omega,\delta)\neq\{0\}$. Substituting the expansion~\eqref{eq:expK} and writing $(1-\delta)K_\omega' = K_\omega' - \delta K_\omega'$ gives
\begin{equation}
\begin{aligned}
\mathcal{R}(\omega,\delta) &= -\tfrac{1+\delta}{2}I + (1-\delta)\left(K_0' + \omega^2\log\omega\,K_1' + \omega^2 K_2' + O(\omega^4\log^2\omega)\right) \\
&= -\tfrac{1+\delta}{2}I + (1-\delta)K_0' + \omega^2\log\omega\,K_1' + \omega^2 K_2' + O(\delta\omega^2\log\omega + \omega^4\log^2\omega),
\end{aligned}
\end{equation}
where the $\delta\omega^2$ term is absorbed into $O(\delta\omega^2\log\omega)$ since $\delta\omega^2 = o(\delta\omega^2|\log\omega|)$ as $\omega\to 0$.

For the logarithmic branches, the dominant balance $\omega^2|\log\omega|\sim\delta$ gives $\omega_{\log} = O(\sqrt{\delta/|\log\delta|})$. At this scaling the residual of~\eqref{eq:R_expansion} satisfies $\delta\omega^2|\log\omega| + \omega^4\log^2\omega = O(\delta^2/|\log\delta|)$. Since the resonance condition $\mathcal{R}(\omega,\delta)\phi = 0$ is perturbed by this amount, and the derivative $\partial_{\omega^2}\mathcal{R}\big|_{\omega^2 = \delta/|\log\delta|} = O(|\log\delta|)$ is bounded away from zero, the implicit function theorem gives a true resonance $\omega_{\log}^*$ satisfying
\begin{equation*}
(\omega_{\log}^*)^2 = \omega_{\log}^2 + O\!\left(\frac{\delta^2/|\log\delta|}{|\log\delta|}\right) 
= \omega_{\log}^2 + O\!\left(\frac{\delta^2}{|\log\delta|^2}\right).
\end{equation*}
Converting from error in $\omega^2$ to error in $\omega$, we have 
$$|\omega^* - \omega| = |(\omega^*)^2 - \omega^2|/|\omega^* + \omega| 
= O(\delta^2/|\log\delta|^2) / O(\sqrt{\delta/|\log\delta|}) 
= O\!\left((\delta/|\log\delta|)^{3/2}\right).$$

For the regular branches, we show first $\ker K_1'$ is nontrivial and infinite-dimensional. Indeed, the kernel of $K_1'$ contains all $\varphi\in L^2(\partial D)$ satisfying $\int_{\partial D}\varphi\,d\sigma = 0$ and $\int_{\partial D}y\,\varphi(y)\,d\sigma(y) = 0$, 
as follows from the identity
\begin{equation}
(K_1'[\varphi])(x) = \frac{\nu_x\cdot x}{4\pi}\int_{\partial D}\varphi\,d\sigma 
- \frac{\nu_x}{4\pi}\cdot\int_{\partial D}y\,\varphi(y)\,d\sigma(y),
\end{equation}
For solutions $\phi\in\ker K_1'$, the logarithmic term vanishes and the dominant balance becomes $\omega^2 K_2'\phi\sim\delta\phi$, giving $\omega_j = O(\sqrt{\delta})$. At this scaling the error in $\mathcal{R}$ is $\delta\omega^2|\log\omega| + \omega^4\log^2\omega = O(\delta^2|\log\delta|^2)$, and $\partial_{\omega^2}\mathcal{R} = O(1)$, so the error in $\omega^2$ is $O(\delta^2|\log\delta|^2)$. Dividing by $\omega_j = O(\sqrt{\delta})$ gives approximation error $O(\delta^{3/2}|\log\delta|^2)$ on $\omega_j$.
\end{proof}

\subsection{Effective matrix for subwavelength resonances}
While Theorem~\ref{thm:reduction} establishes a highly accurate reduced operator equation, applying a Galerkin discretization directly to this formulation leaves two practical challenges: the multiplicity of the resonance branches is not explicit, and the underlying nonlinear eigenvalue problem requires a robust initial guess for numerical root-finding. In this section, we derive an effective eigenvalue equation (Theorem~\ref{thm:branch_scaling}) that resolves both issues. This formulation specifies the branch counts and provides high-fidelity asymptotic approximations, yielding initial guesses accurate to $O(\sqrt{\delta}/|\log \delta|)$ and $O(\sqrt{\delta/|\log \delta|})$ for the logarithmic and regular branches, respectively.

Recall the Fourier-Galerkin space $\mathcal X_F$ defined in Section~\ref{sec:BEM} in terms of the parametrization $x_j(\theta)$, $\theta \in [0,2\pi)$, on each $\partial D_j$. We equip it with an orthonormal basis by normalizing the trigonometric basis:
\begin{equation}
\label{eq:basis_decomp}
\mathcal X_F=\mathcal X_{F,0}\oplus \mathcal X_{F,\perp},
\end{equation}
where the orthonormal basis functions are
\begin{equation}
\label{eq:basis}
\eta_{j,n}(\theta) = \frac{1}{\sqrt{|\partial D_j|}} e^{in\theta},
\qquad j=1,\dots,N,\quad |n|\le F,
\end{equation}
The subspaces are
\begin{equation}
\mathcal X_{F,0}:=\mathrm{span}\{\eta_{j,0}:j=1,\dots,N\},
\qquad
\mathcal X_{F,\perp}:=\mathrm{span}\{\eta_{j,n}:j=1,\dots,N,\ 0<|n|\le F\}.
\end{equation}
For $F \geq 0$, the three Galerkin matrices of $-\tfrac{1}{2}I + K_0'$, $K_1'$, $K_2'$ are given by
\begin{equation}
\label{eq:galerkin_matrices}
\begin{split}
   & (C_0^F)_{(i,m),(j,n)} = \langle e^{im\theta},\,(-\tfrac{1}{2}I+K_0')[e^{in\cdot}\chi_{\partial D_j}]\rangle_{\partial D_i},
   \\
& (K_1^F)_{(i,m),(j,n)} = \langle e^{im\theta},\,K_1'[e^{in\cdot}\chi_{\partial D_j}]\rangle_{\partial D_i},
\\
& (K_2^F)_{(i,m),(j,n)} = \langle e^{im\theta},\,K_2'[e^{in\cdot}\chi_{\partial D_j}]\rangle_{\partial D_i}.
\end{split}
\end{equation}
The Galerkin matrix of $\mathcal{R}$ is then given by
\begin{equation}
\label{eq:RF}
R_F(\omega,\delta) = (1-\delta)C_0^F - \delta I_{N(2F+1)}
+ \omega^2\log\omega\,K_1^F + \omega^2 K_2^F
+ O(\delta\omega^2\log\omega,\,\omega^4(\log\omega)^2).
\end{equation}
The resonance condition is $\det R_F(\omega,\delta) = 0$,
a nonlinear eigenvalue problem of size $N(2F+1)$, solved for each branch by finding a null vector $\mathbf{c}^\ell \in \mathbb{C}^{N(2F+1)}$ satisfying $R_F(\omega_\ell,\delta)\,\mathbf{c}^\ell = 0$. The components of $\mathbf{c}^\ell$ are indexed by pairs $(j,n)$ with $j=1,\dots,N$ and $|n|\leq F$, and the corresponding approximate eigenmode is
\begin{equation}
\label{eq:eigenmode_approx}
u_\ell^F(x)
= \sum_{j=1}^N \sum_{|n|\leq F} c_{j,n}^{(\ell)}\,\eta_{j,n}(x)
= \sum_{j=1}^N \frac{\chi_{\partial D_j}(x)}{\sqrt{|\partial D_j|}}
  \sum_{|n|\leq F} c_{j,n}^{(\ell)}\,e^{in\theta_j(x)},
\end{equation}
where $\theta_j(x)$ denotes the parametrisation angle of the point
$x\in\partial D_j$.
Thus $u_\ell^F$ is supported on $\bigcup_j \partial D_j$ and,
on each boundary $\partial D_j$, is a trigonometric polynomial of degree $F$
with coefficients $\bigl(c_{j,n}^{(\ell)}\bigr)_{|n|\leq F}$.

The following Proposition presents the explicit reduction of the $F=0$ 
truncation of $R_F(\omega,\delta)$ to an $N\times N$ effective eigenvalue 
problem, in which the resonance condition decouples from the oscillatory 
modes $\mathcal{X}_{F,\perp}$ and is governed entirely by the constant-mode 
subspace $\mathcal{X}_{F,0}$.

\begin{proposition}
\label{prop:effective_F0}
For $F = 0$, the Galerkin resonance condition $\det R_F(\omega,\delta) = 0$ in $\mathcal{X}_{F,0} = \operatorname{span}\{\eta_{j,0} : j=1,\dots,N\}$ is equivalent to
\begin{equation}
\label{eq:effective_eq}
C_{\operatorname{eff}}(\omega,\delta)\,x = O(\omega^4(\log\omega)^2),
\end{equation}
where the effective capacitance matrix is defined as
\begin{equation}
C_{\operatorname{eff}}(\omega,\delta) := \delta I_N - \omega^2\log\omega\,K_1^0 - \omega^2 K_2^0,
\end{equation}
and the matrices $K_1^0, K_2^0 \in \mathbb{C}^{N\times N}$ have entries
\begin{equation}
\label{eq:H1}
(K_1^0)_{ij} = \langle \eta_{i,0}, K_1' \eta_{j,0} \rangle = \frac{|D_i|}{2\pi}\sqrt{\frac{|\partial D_j|}{|\partial D_i|}},
\end{equation}
\begin{equation}
\label{eq:H2}
\begin{split}
    & (K_2^0)_{ij} =  \langle \eta_{i,0}, K_2' \eta_{j,0} \rangle\\ = & \frac{1}{4\pi\sqrt{|\partial D_i||\partial D_j|}}
\int_{\partial D_i}\int_{\partial D_j}
\bigl((x-y)\cdot \nu_x\bigr)
\log\!\left(\frac{|x-y|}{2}\right)d\sigma(y)\,d\sigma(x)
+
\left(\gamma-\tfrac12-\frac{i\pi}{2}\right)(K_1^0)_{ij}.
\end{split}
\end{equation}
\end{proposition}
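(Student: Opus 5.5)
The approach is to restrict the Galerkin matrix \eqref{eq:RF} to $F=0$ and evaluate each block on the constant modes $\eta_{j,0}=\chi_{\partial D_j}/\sqrt{|\partial D_j|}$ using the orthonormal basis \eqref{eq:basis}. The argument has three steps.

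\textbf{Step 1: the zeroth-order block.} First I will show that the Galerkin matrix of $-\tfrac12 I+K_0'$ restricted to $\mathcal X_{F,0}$ vanishes. The classical Gauss-type identity gives
\begin{equation*}
\int_{\partial D_i}\partial_{\nu_x}G_0(x,y)\,d\sigma(x)=
\begin{cases}
-\tfrac12, & y\in\partial D_i,\\
0, & y\in\partial D_j,\ j\neq i .
\end{cases}
\end{equation*}
It follows that $(-\tfrac12 I+K_0')^*[\chi_{\partial D_i}] = -\chi_{\partial D_i}$ in the appropriate sense. The correct statement is that $(-\tfrac12 I+K_0)[\chi_{\partial D_i}]=0$ in $L^2$ duality. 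Hence $\langle \eta_{i,0},(-\tfrac12 I+K_0')\eta_{j,0}\rangle=0$ for all $i,j$, so $C_0^0=0$.

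With $C_0^0=0$ and $I_N$ unchanged in the orthonormal basis, \eqref{eq:RF} at $F=0$ reads
\begin{equation*}
R_0(\omega,\delta)=-\delta I_N+\omega^2\log\omega\,K_1^0+\omega^2K_2^0+O(\delta\omega^2\log\omega+\omega^4\log^2\omega)=-C_{\operatorname{eff}}(\omega,\delta)+\text{remainder}.
\end{equation*}
In the subwavelength scaling, where $\delta$ is comparable to $\omega^2|\log\omega|$, the term $\delta\omega^2\log\omega$ is absorbed into $O(\omega^4\log^2\omega)$. This gives \eqref{eq:effective_eq}: a null vector $x$ of $R_0$ satisfies $C_{\operatorname{eff}}x=O(\omega^4(\log\omega)^2)$, and conversely.

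\textbf{Step 2: the entries of $K_1^0$.} I will use the identity for $K_1'$ from the proof of Theorem~\ref{thm:reduction} with $\varphi=\eta_{j,0}$:
\begin{equation*}
K_1'[\eta_{j,0}](x)=\frac{1}{4\pi\sqrt{|\partial D_j|}}\Bigl(\nu_x\cdot x\,|\partial D_j|-\nu_x\cdot\int_{\partial D_j}y\,d\sigma(y)\Bigr).
\end{equation*}
I then integrate against $\eta_{i,0}$ over $\partial D_i$ and apply the divergence theorem on $D_i$. This gives $\int_{\partial D_i}\nu_x\cdot x\,d\sigma=2|D_i|$ and $\int_{\partial D_i}\nu_x\,d\sigma=0$, so the second term disappears. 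The result is $\frac{2|D_i|\,|\partial D_j|}{4\pi\sqrt{|\partial D_i||\partial D_j|}}$, which is exactly \eqref{eq:H1}.

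\textbf{Step 3: the entries of $K_2^0$.} Substituting the kernel \eqref{eq:kernels} splits the pairing into two parts. The logarithmic part produces the double integral in \eqref{eq:H2} directly. The part $c_\gamma(x-y)\cdot\nu_x$ is $4\pi c_\gamma$ times the kernel of $K_1'$, so it contributes $4\pi c_\gamma (K_1^0)_{ij}=(\gamma-\tfrac12-\tfrac{i\pi}{2})(K_1^0)_{ij}$.

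The main obstacle is Step 1. I have to verify carefully that the Galerkin pairing of the constant test function with $(-\tfrac12 I+K_0')$ on constant densities vanishes for every block $(i,j)$, including the off-diagonal ones. This requires exchanging the order of integration and using the adjoint identity $\int_{\partial D_i}\partial_{\nu_x}G_0(x,y)\,d\sigma(x)$ for $y$ both on and off $\partial D_i$; the on-boundary case needs the principal-value convention. A secondary point is stating explicitly the relative scaling of $\delta$ and $\omega$ under which the $\delta\omega^2\log\omega$ remainder is dominated by $O(\omega^4\log^2\omega)$.
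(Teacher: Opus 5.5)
Your Steps 2 and 3 are the paper's own computation: the divergence theorem gives $\int_{\partial D_i}(x-y)\cdot\nu_x\,d\sigma(x)=2|D_i|$, and $4\pi c_\gamma=\gamma-\tfrac12-\tfrac{i\pi}{2}$. Your remark on absorbing the $\delta\omega^2\log\omega$ remainder is more careful than the paper (it also holds when $\delta\sim\omega^2$).

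The genuine gap is Step~1, where the proposal contradicts itself at the point everything rests on. Take the kernel you use, $\partial_{\nu_x}G_0=-\frac{(x-y)\cdot\nu_x}{2\pi|x-y|^2}$ from \eqref{eq:kernels}. Your Gauss identity is then correct: $-\tfrac12$ for $y\in\partial D_i$ and $0$ for $y\notin\overline{D_i}$. Exchanging the integrals gives $\langle\chi_{\partial D_i},(-\tfrac12I+K_0')\chi_{\partial D_j}\rangle=-|\partial D_i|\,\delta_{ij}$, i.e.\ $C_0^0=-I_N$. You then simply declare the opposite. This is not cosmetic: with $C_0^0=-I_N$ you get $R_0=-I_N+O(\omega^2|\log\omega|)$, which is invertible, so there would be no subwavelength resonance at all.

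The cause is a sign clash you inherited from the paper. For $\Phi_\omega=\tfrac i4H_0^{(1)}$ and the kernels \eqref{eq:kernels}, the interior Neumann trace is $\partial_\nu\mathcal S_0[\varphi]|_-=(\tfrac12I+K_0')\varphi$. By contrast, \eqref{eq:block_system} and \eqref{eq:R_expansion} treat $-\tfrac12I+K_0'$ as the interior trace. The paper's own proof reaches $C_0^0=0$ by claiming that $\mathcal S_0[\chi_{\partial D_j}]$ is constant in $D_j$. That is true for a disk but false for general shapes such as ellipses, and it does not address the blocks $i\neq j$.

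The idea your adjoint route is reaching for is the right one, and it is better than the paper's. If $T$ denotes the interior Neumann trace of the single layer, then $\int_{\partial D_i}T\varphi\,d\sigma=\int_{D_i}\Delta\mathcal S_0[\varphi]\,dx=0$ for every density $\varphi$. So constant test functions annihilate the range of $T$ on every block and for every smooth shape.

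However, you must then carry out Steps 2 and 3 in the same convention, and this changes the answer. In the convention of \eqref{eq:kernels}, the transmission conditions give, up to sign, $\mathcal R=\tfrac{1+\delta}{2}I+(1-\delta)K_\omega'$. Using $\langle\eta_{i,0},K_0'\eta_{j,0}\rangle=-\tfrac12\delta_{ij}$ and your Steps 2--3 verbatim, its constant-mode block is $\delta I_N+\omega^2\log\omega\,K_1^0+\omega^2K_2^0+O(\delta\omega^2|\log\omega|+\omega^4\log^2\omega)$. This has the opposite relative sign to $C_{\operatorname{eff}}$.

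In the other convention, with kernel $+\frac{(x-y)\cdot\nu_x}{2\pi|x-y|^2}$ as used for $C_0^F$ in Section~\ref{sec:num2}, one does get $C_0^0=0$. But then $K_1^0$ and $K_2^0$ flip sign, consistent with the circle symbols $-a_i^2/2$ and $k_2(0)$ in Section~\ref{sec:galerkin_assembly}.

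A single disk of radius $a$ settles which is right. The exact monopole condition $\delta J_0(z)H_0^{(1)\prime}(z)=J_0'(z)H_0^{(1)}(z)$, with $z=\omega a$, expands to $\delta=-\tfrac{z^2}{2}\bigl(\log\tfrac z2+\gamma-\tfrac{i\pi}{2}\bigr)+\dots$. This is exactly $\delta+\omega^2\log\omega\,K_1^0+\omega^2K_2^0=0$ with $K_1^0=\tfrac{a^2}{2}$ and $K_2^0=\tfrac{a^2}{2}\bigl(\log\tfrac a2+\gamma-\tfrac{i\pi}{2}\bigr)$ from \eqref{eq:H1}--\eqref{eq:H2}. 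By contrast, $\delta-\omega^2\log\omega\,K_1^0=0$ has no small positive root.

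So the statement as written follows only from the hybrid the paper uses implicitly: $K_0'$ in one convention and $K_1',K_2'$ in the other. With a single convention, your argument proves the statement with $C_{\operatorname{eff}}$ replaced by $\delta I_N+\omega^2\log\omega\,K_1^0+\omega^2K_2^0$.
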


\begin{proof}
The Galerkin resonance condition $\det R_F(\omega,\delta) = 0$ involves the operator 
$(-\frac{1}{2}I + K_0')$ for $F=0$. For a constant density $\rho=1$ on $\partial D_j$, 
the single-layer potential $S[\rho](x) = \int_{\partial D_j} \Phi(x,y)\,d\sigma(y)$ is 
constant for all $x \in D_j$, so its gradient vanishes inside $D_j$ and the internal 
normal derivative is zero: $(-\frac{1}{2}I + K_0')[1] = 0$. Thus 
$\langle \eta_{i,0}, (-\frac{1}{2}I + K_0')\eta_{j,0}\rangle = 0$, and the resonance 
condition reduces to $\det(\delta I_N - \omega^2\log\omega\,K_1^0 - \omega^2 K_2^0) = 0$.

Using $\eta_{j,0} = |\partial D_j|^{-1/2}$, the entry $(K_1^0)_{ij}$ follows from the 
divergence theorem:
\begin{equation*}
(K_1^0)_{ij} 
= \frac{1}{\sqrt{|\partial D_i||\partial D_j|}} \int_{\partial D_i} \int_{\partial D_j} 
\frac{(x-y)\cdot\nu_x}{4\pi}\,d\sigma(y)\,d\sigma(x) 
= \frac{|D_i|}{2\pi}\sqrt{\frac{|\partial D_j|}{|\partial D_i|}},
\end{equation*}
where we used $\int_{\partial D_j} d\sigma(y) = |\partial D_j|$ and 
$\int_{\partial D_i}(x-y)\cdot\nu_x\,d\sigma(x) = 2|D_i|$ by the divergence theorem.

For $(K_2^0)_{ij}$, recall that the kernel of $K_2'$ is
\begin{equation*}
K_2'(x,y) = \frac{(x-y)\cdot\nu_x}{4\pi}\log\frac{|x-y|}{2} 
+ \left(\gamma - \tfrac{1}{2} - \tfrac{i\pi}{2}\right)\frac{(x-y)\cdot\nu_x}{4\pi},
\end{equation*}
where the second term arises from the expansion of the Hankel function $H_0^{(1)}$ at 
small argument. Substituting into $\langle \eta_{i,0}, K_2'\eta_{j,0}\rangle$ and 
splitting the two contributions gives
\begin{equation*}
(K_2^0)_{ij} 
= \frac{1}{\sqrt{|\partial D_i||\partial D_j|}} \int_{\partial D_i}\int_{\partial D_j}
\frac{(x-y)\cdot\nu_x}{4\pi}\log\frac{|x-y|}{2}\,d\sigma(y)\,d\sigma(x)
+ \left(\gamma - \tfrac{1}{2} - \tfrac{i\pi}{2}\right)(K_1^0)_{ij},
\end{equation*}
where the second term uses the same divergence theorem computation as for $(K_1^0)_{ij}$. 
Dividing by $\sqrt{|\partial D_i||\partial D_j|}$ yields the stated formula \eqref{eq:H2}.
\end{proof}

The eigenvalue problem in \eqref{eq:effective_eq} can be further analyzed by exploiting the spectral structure of $K_1^0$, which facilitates a classification of the resonance branches. A direct computation shows that $K_1^0$ is a rank-one matrix, admitting the outer product representation $K_1^0 =\frac{1}{2\pi} \, v_1 w_1^\top$, where
\begin{equation}
\label{eq:H1_rank1}
v_1 = \left(\frac{|D_1|}{\sqrt{|\partial D_1|}},\dots,\frac{|D_N|}{\sqrt{|\partial D_N|}}\right)^{\!\top},
\quad
w_1 = \left(\sqrt{|\partial D_1|},\dots,\sqrt{|\partial D_N|}\right)^{\!\top}.
\end{equation}
Consequently, the kernel of $K_1^0$ is given by
\begin{equation}
\label{eq:K10}
\ker K_1^0 = \left\{x\in\mathbb{C}^N : \sum_{j=1}^N \sqrt{|\partial D_j|}\,x_j=0\right\}.
\end{equation}
The matrix $K_1^0$ has one non-zero eigenvalue $\mu_1 = \frac{1}{2\pi} \sum_{i = 1}^N \abs{D_i}$ corresponding to the eigenvector $v_1$. This rank-one structure allows the eigenvalue problem \eqref{eq:effective_eq} to decouple into a scalar equation in $V = \mathrm{span}\{v_1\}$ and an $(N-1)\times(N-1)$ reduced system on $V^\perp = \ker K_1^0$. We express $K_2^0$ in block form with respect to the decomposition $\mathbb{C}^N = V \oplus V^\perp$:
\begin{equation}
\label{eq:K20}
K_2^0 =
\begin{pmatrix}
\alpha & b^* \\
c & B
\end{pmatrix},
\qquad
\alpha = P_V K_2^0 P_V \in \mathbb{C},
\quad
B = P_{V^\perp} K_2^0 P_{V^\perp} \in \mathbb{C}^{(N-1)\times(N-1)},
\end{equation}
where $P_V$ and $P_{V^\perp}$ denote the orthogonal projections onto $V$ and $V^\perp$, respectively. The following theorem characterizes the $N$ branches of subwavelength resonances.

\begin{theorem}
\label{thm:branch_scaling}
Assume the matrix $B$ is invertible. For sufficiently small $\delta > 0$, there exist one logarithmic branch and $N-1$ regular branches for the eigenvalue problem~\eqref{eq:effective_eq}. More precisely,
\begin{equation}
\label{eq:log_branch}
\omega_{\log}^2
= \frac{2\delta}{\mu_1 \log\delta - \mu_1 \log(-\frac{1}{2}\mu_1 \log\delta) + 2\alpha}
+ O\!\left(\frac{\delta}{|\log\delta|^2}\right),
\end{equation}
and, for $j = 2,\dots,N$,
\begin{equation}
\label{eq:reg_branch}
\omega_j^2 = \frac{\delta}{\nu_j}
+ O\!\left(\frac{\delta}{|\log\delta|}\right),
\end{equation}
where $\nu_2,\dots,\nu_N$ are the eigenvalues of $B$.
\end{theorem}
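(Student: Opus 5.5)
Setting $\lambda := \omega^2$ and $L := \log\omega = \tfrac12\log\lambda$, the equation \eqref{eq:effective_eq} at leading order reads $\det(\delta I_N - \lambda L\,K_1^0 - \lambda K_2^0) = 0$. We write the matrix in the block form adapted to $\mathbb{C}^N = V\oplus V^\perp$ from \eqref{eq:K20}. Since $K_1^0 = \frac{1}{2\pi}v_1w_1^\top$ has range $V$ and kernel $V^\perp$ by \eqref{eq:K10}, its blocks are $\mu_1$ in the $(V,V)$ slot, zero in the $V^\perp$ column, and a possible off-diagonal entry $k$ in the $(V,V^\perp)$ slot. This entry comes from the non-orthogonality of $v_1$ and $w_1$; we work with the oblique splitting $\mathbb{C}^N = V\oplus\ker K_1^0$ so that it vanishes, and then
\[
\delta I - \lambda L K_1^0 - \lambda K_2^0 = \begin{pmatrix} \delta - \lambda L\mu_1 - \lambda\alpha & -\lambda b^* \\ -\lambda c & \delta I - \lambda B\end{pmatrix}.
\]
The two branch types come from the two ways of making this singular. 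Either the lower-right block $\delta I - \lambda B$ is nearly singular, with $\lambda = O(\delta)$, or the scalar Schur complement vanishes, with $\lambda L\sim\delta$.

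\emph{Regular branches.} Set $\lambda = \delta s$ with $s = O(1)$. The upper-left entry is then $\delta(1 - s\alpha) - \delta s L\mu_1$, and since $|L|\sim\tfrac12|\log\delta|$ it is of size $\delta|\log\delta|$ and invertible. Eliminating the $V$ component by a Schur complement onto $V^\perp$ gives
\[
\delta\Bigl(I - sB - \frac{\delta s^2\, c\, b^*}{1 - s\alpha - sL\mu_1}\Bigr),
\]
where the correction term is $O(1/|\log\delta|)$. Dividing by $\delta$, the condition is $\det(I - sB + O(1/|\log\delta|)) = 0$. Because $B$ is invertible, its eigenvalues $\nu_j$ are nonzero, and the $N-1$ roots $s = 1/\nu_j$ of the unperturbed problem persist; this follows from Rouché's theorem applied to the determinant, or from eigenvalue continuity if the $\nu_j$ are simple. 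The perturbed roots satisfy $s = 1/\nu_j + O(1/|\log\delta|)$, which gives \eqref{eq:reg_branch}. Here $L$ still depends on $s$, but only through $\log s$, and this is harmless.

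\emph{Logarithmic branch.} Take the Schur complement onto $V$ instead:
\[
\delta - \lambda L\mu_1 - \lambda\alpha - \lambda^2 b^*(\delta I - \lambda B)^{-1}c = 0.
\]
When $\lambda\sim\delta/|\log\delta|$, the matrix $\delta I - \lambda B$ is close to $\delta I$, so the last term is $O(\lambda^2/\delta) = O(\delta/|\log\delta|^2)$. Dividing the scalar equation by $\lambda$ gives
\[
\frac{\delta}{\lambda} = \mu_1 L + \alpha + O(1/|\log\delta|),
\qquad L = \tfrac12\log\lambda.
\]
Solve this by fixed-point iteration. The zeroth iterate $\lambda_0 = \delta/(-\tfrac12\mu_1\log\delta)$ has $\log\lambda_0 = \log\delta - \log(-\tfrac12\mu_1\log\delta)$. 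Substituting back yields
\[
\lambda = \frac{2\delta}{\mu_1\log\delta - \mu_1\log\bigl(-\tfrac12\mu_1\log\delta\bigr) + 2\alpha + O(\log|\log\delta|/|\log\delta|)}.
\]
The stated form therefore carries a relative error of order $O(\log|\log\delta|/|\log\delta|)$. Multiplying by $\lambda\sim\delta/|\log\delta|$, the absolute error is at most $O(\delta\log|\log\delta|/|\log\delta|^2)$, essentially the claimed $O(\delta/|\log\delta|^2)$ up to the iterated logarithm. Existence and uniqueness in a neighbourhood come from the contraction mapping theorem, since the map $\lambda\mapsto\delta/(\mu_1\tfrac12\log\lambda+\alpha)$ has derivative $O(1/|\log\delta|)$ there. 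The $O(\omega^4\log^2\omega)$ remainder of \eqref{eq:effective_eq} is lower order in both regimes. Finally, to show there are no other small roots and the total count is exactly $N$, I would apply Rouché's theorem to the determinant in the variable $s$ on a suitable annulus.

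\emph{Main obstacle.} The main difficulty is the sharp error term on the log branch, where the $\log\log$ corrections have to be tracked. I would first check whether the second fixed-point iterate absorbs the $\log(\lambda/\lambda_0)$ discrepancy to the stated order. If it does not, the error bound has to be stated with the extra $\log|\log\delta|$ factor.
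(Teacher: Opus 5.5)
Your route is the paper's. You put $C_{\operatorname{eff}}$ in block form with respect to $V$ and a complement, and eliminate the $V$-component (the paper instead factors the determinant), with Rouch\'e for the $N-1$ roots at $\omega^2=O(\delta)$. You then take a Schur complement onto $V$ and use a logarithmic fixed point for the remaining root. The argument works, but the ``main obstacle'' you describe comes from an arithmetic slip, and as written you stop short of the stated bound. Let $D:=\mu_1\log\delta-\mu_1\log(-\tfrac12\mu_1\log\delta)+2\alpha\sim\mu_1\log\delta$ be the denominator. An additive error $E$ in $D$ changes the root by
\[
\frac{2\delta}{D+E}-\frac{2\delta}{D}=-\frac{2\delta E}{D(D+E)}=O\!\left(\frac{\delta\,|E|}{|\log\delta|^{2}}\right),
\]
so the relative error is $E/D$, not $E$. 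Your $E=O(\log|\log\delta|/|\log\delta|)$ therefore gives an absolute error $O(\delta\log|\log\delta|/|\log\delta|^{3})$, well inside $O(\delta/|\log\delta|^{2})$. No second iterate and no extra iterated logarithm are needed. The stated order is in fact saturated by $O(1)$ terms in the denominator, and you dropped one. From $\delta/\lambda\approx\tfrac12\mu_1\log\delta<0$, the zeroth iterate is $\lambda_0=2\delta/(\mu_1\log\delta)$, not $\delta/(-\tfrac12\mu_1\log\delta)$. Hence $\log\lambda_0$ carries an extra $\pm i\pi$ and $D$ an extra $\pm i\pi\mu_1$. By the display this costs exactly $O(\delta/|\log\delta|^2)$, as does any $O(1)$ ambiguity in $\alpha$. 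The paper hides the same sign in $\mu_1|\tilde\omega|e^{2\tilde\omega}\sim\delta$.

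The oblique splitting $\mathbb{C}^N=V\oplus\ker K_1^0$ is the right choice, and it actually repairs the paper's proof. Let $k^*$ denote the $(V,V^\perp)$ block of $K_1^0$ in the orthogonal splitting. The paper asserts $V^\perp=\ker K_1^0$ and so drops the term $-\omega^2\log\omega\,k^*$ from that block. However, you must say explicitly that the oblique splitting changes $\alpha,b,c,B$. Keeping $k^*$ in the orthogonal Schur complement at $\omega^2=\delta s$ gives $\delta\bigl(I-s(B-\mu_1^{-1}c\,k^*)\bigr)+O(\delta/|\log\delta|)$. So your $\nu_j$ are the eigenvalues of $B-\mu_1^{-1}ck^*$, equivalently of the compression of $K_2^0$ to $\ker K_1^0$ along $V$. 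They are not the eigenvalues of $P_{V^\perp}K_2^0P_{V^\perp}$. Now $k^*=0$ exactly when $w_1\parallel v_1$, i.e.\ when $|D_j|/|\partial D_j|$ is independent of $j$. Thus \eqref{eq:reg_branch} with the paper's $B$ holds in that case (e.g.\ identical resonators), but in general it is off at order $\delta$.

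Two smaller points remain. First, your Schur complement onto $V^\perp$ has a spurious factor $\delta$: the correction is $s^2cb^*/(1-s\alpha-sL\mu_1)$, and that is the term which is $O(1/|\log\delta|)$. Second, call this correction $E(s)$. At a repeated $\nu_j$, Rouch\'e alone does not give the $O(1/|\log\delta|)$ rate; for a Jordan block of size $k$ one only gets $O(|\log\delta|^{-1/k})$. The rate needs $B$ diagonalisable, via Bauer--Fike applied to $1/s\in\sigma\bigl(B+s^{-1}E(s)\bigr)$. The paper omits this restriction as well.
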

\begin{proof}
Since $K_1^0 = \frac{1}{2\pi}v_1 w_1^\top$ is rank one with range $V = \mathrm{span}\{v_1\}$ and vanishes identically on $V^\perp = \ker K_1^0$, the matrix $C_{\operatorname{eff}}$ takes the block form in the basis $V\oplus V^\perp$:
\begin{equation}
\label{eq:block}
C_{\operatorname{eff}}(\omega,\delta) = \begin{pmatrix}
\delta - \omega^2\log\omega\,\mu_1 - \omega^2\alpha & -\omega^2 b^* \\
-\omega^2 c & \delta I_{N-1} - \omega^2 B
\end{pmatrix},
\end{equation}
where the $\log\omega$ term appears only in the $(V,V)$ entry. We analyse the resonance condition $\det C_{\operatorname{eff}} = 0$ in two asymptotic regimes.

First, at the scaling $\omega^2 = O(\delta)$, we expand $\det C_{\operatorname{eff}}$ along the first row to obtain
\begin{equation}
\label{eq:det_expand}
\det C_{\operatorname{eff}} = (\delta - \omega^2\log\omega\,\mu_1 - \omega^2\alpha)
\det(\delta I_{N-1} - \omega^2 B) + O(\delta^N).
\end{equation}
Near each root $\omega^2 = \delta/\nu_j$, the factor $(\delta - \omega^2\log\omega\,\mu_1 - \omega^2\alpha) = O(\delta)$ is bounded away from zero, so the resonance condition~\eqref{eq:effective_eq} with \eqref{eq:det_expand} reduces at leading order to $\det(\delta I_{N-1} - \omega^2 B) = 0$, whose solutions are $\omega^2 = \delta/\nu_j$ for $j = 2,\dots,N$. An application of Rouch\'e's theorem, comparing $\det C_{\operatorname{eff}}$ to $\det(\delta I_{N-1} - \omega^2 B)\cdot(\delta - \omega^2\log\omega\,\mu_1 - \omega^2\alpha)$ on small contours enclosing each $\delta/\nu_j$ in the $\omega^2$-plane, confirms that exactly $N-1$ resonances exist at this scaling. Therefore we have
\begin{equation}
\omega_j^2 = \frac{\delta}{\nu_j} + O\!\left(\frac{\delta}{|\log\delta|}\right), 
\qquad j = 2,\dots,N,
\end{equation}

Secondly, at the scaling $\omega^2 = O(\delta/|\log\delta|)$, we have $\omega^2 \ll \delta$, so $\delta I_{N-1} - \omega^2 B$ is invertible for small $\delta$. The Schur complement of $C_{\operatorname{eff}}$ onto the $V$ block is therefore well-defined, and the resonance condition $\det C_{\operatorname{eff}} = 0$ is equivalent to
\begin{equation}
\label{eq:schur}
\delta - \omega^2(\mu_1\log\omega + \alpha) - \omega^4 c^*(\delta I_{N-1} - \omega^2 B)^{-1}b = 0,
\end{equation}
with the Schur complement term satisfying $\omega^4 c^*(\delta I_{N-1} - \omega^2 B)^{-1}b = O(\delta/|\log\delta|^2)$, and \eqref{eq:schur} reduces at leading order to the scalar equation $\delta - \omega^2\mu_1\log\omega = 0$. Setting $\omega = e^{\tilde\omega}$ with $\tilde\omega \to -\infty$, the leading-order equation becomes $\mu_1|\tilde\omega|e^{2\tilde\omega} \sim \delta$. An application of Rouch\'e's theorem on a contour in the $\tilde\omega$-plane enclosing the region 
where this balance holds confirms that exactly one resonance exists at this scaling. 
To compute the asymptotics of the logarithmic branch, we apply the logarithmic-substitution method to $\mu_1|\tilde\omega|e^{2\tilde\omega} \sim \delta$. Taking logarithms gives
\begin{equation}
\label{eq:logbalance}
2\tilde\omega + \log(\mu_1|\tilde\omega|) = \log\delta + O\!\left(\tfrac{1}{|\log\delta|}\right).
\end{equation}
The linear term dominates, giving leading order $\tilde\omega \sim \tfrac{1}{2}\log\delta$. Writing $2\tilde\omega = \log\delta + y$ with $y = o(\log\delta)$ and substituting into \eqref{eq:logbalance} yields $y = -\log(-\tfrac{1}{2}\mu_1\log\delta) + O(\log|\log\delta|/|\log\delta|)$, and hence
\begin{equation}
\label{eq:w_expansion}
\tilde\omega = \tfrac{1}{2}\log\delta - \tfrac{1}{2}\log\!\left(-\tfrac{1}{2}\mu_1\log\delta\right) + O\!\left(\frac{\log|\log\delta|}{|\log\delta|}\right).
\end{equation}
Substituting \eqref{eq:w_expansion} into the denominator of \eqref{eq:schur} and recalling $\omega_{\log}^2 = e^{2\tilde\omega}$, we obtain
\begin{equation}
\omega_{\log}^2 = \frac{2\delta}{\mu_1\log\delta - \mu_1\log(-\frac{1}{2}\mu_1\log\delta) + 2\alpha} + O\!\left(\frac{\delta}{|\log\delta|^2}\right).
\end{equation}

The two scalings are asymptotically disjoint since $\delta/|\log\delta| \ll \delta$, so together they account for all $N$ branches.
\end{proof}

\begin{figure}[h]
    \centering
    \includegraphics[width=\linewidth]{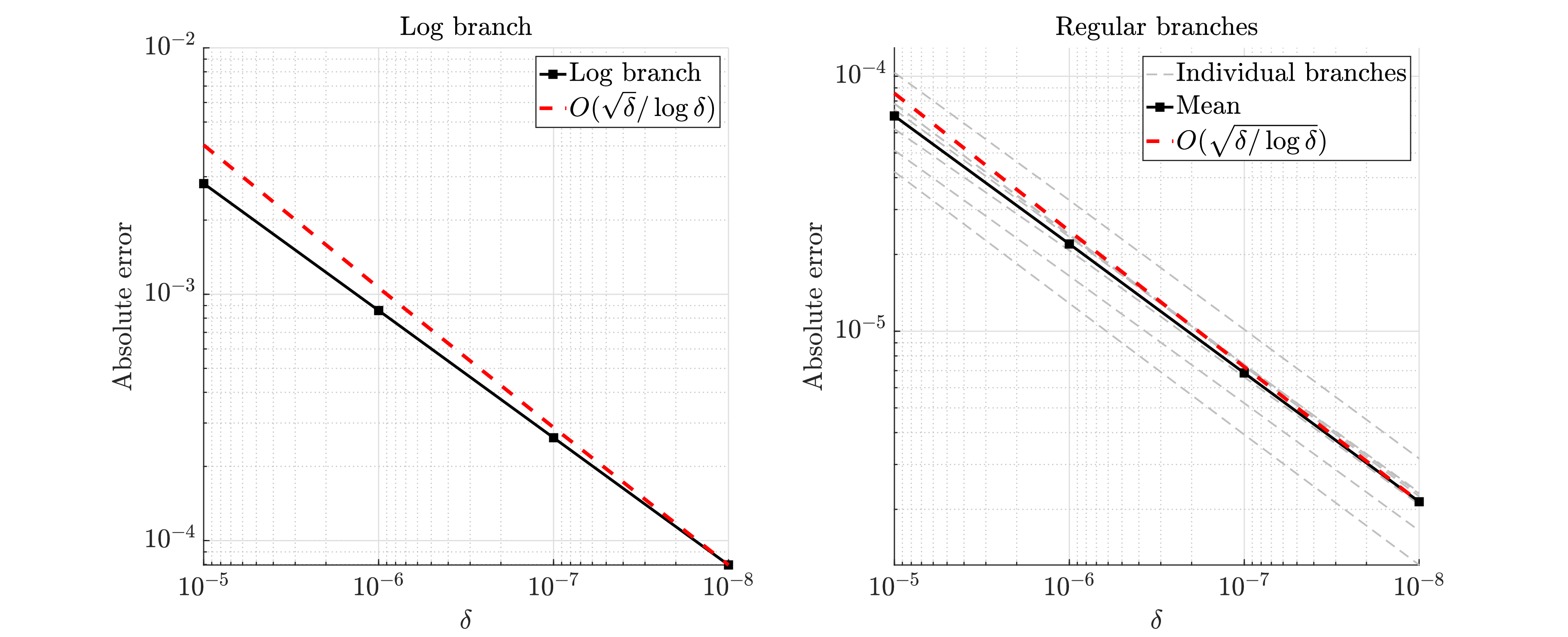}
\caption{Resonance error convergence for $N=8$ circular resonators, validating Theorem~\ref{thm:branch_scaling}.}    \label{fig:branch_convergence}
\end{figure}
\begin{remark}
  Figure \ref{fig:branch_convergence} shows resonance convergence for a numerical simulation with $N = 8$ circular resonators. The left panel demonstrates that the logarithmic branch achieves substantially smaller absolute error than the regular branches across all scales of $\delta$, consistent with Theorem \ref{thm:branch_scaling}'s prediction of superlinear convergence. Both panels confirm the predicted asymptotic scalings in the regime $\delta \lesssim 10^{-6}$. The right panel displays individual errors for each of the $N-1$ regular branches (faint dashed lines), which exhibit expected variability due to the distinct eigenvalue ratios $\nu_j$ of the system; their mean (solid line) tracks the asymptotic envelope, validating the $\delta$-scaling uniformly across branches. 
\end{remark}

\begin{remark}
  Figure \ref{fig:branch_convergence2} shows resonance convergence for $N = 8$ circular resonators across Fourier truncation levels $F = 0, \ldots, 6$ (indicated by the colorbar). The logarithmic branch (left panel) exhibits absolute error that tracks the theoretical rate $O(\delta^{3/2})$, while the regular branches (right panel, mean) follow $O((\delta/|\log\delta|)^{3/2})$. These error bounds confirm that Theorem~\ref{thm:reduction} characterizes the subwavelength resonance structure.
\end{remark}

\begin{figure}[H]
    \centering
    \includegraphics[width=\linewidth]{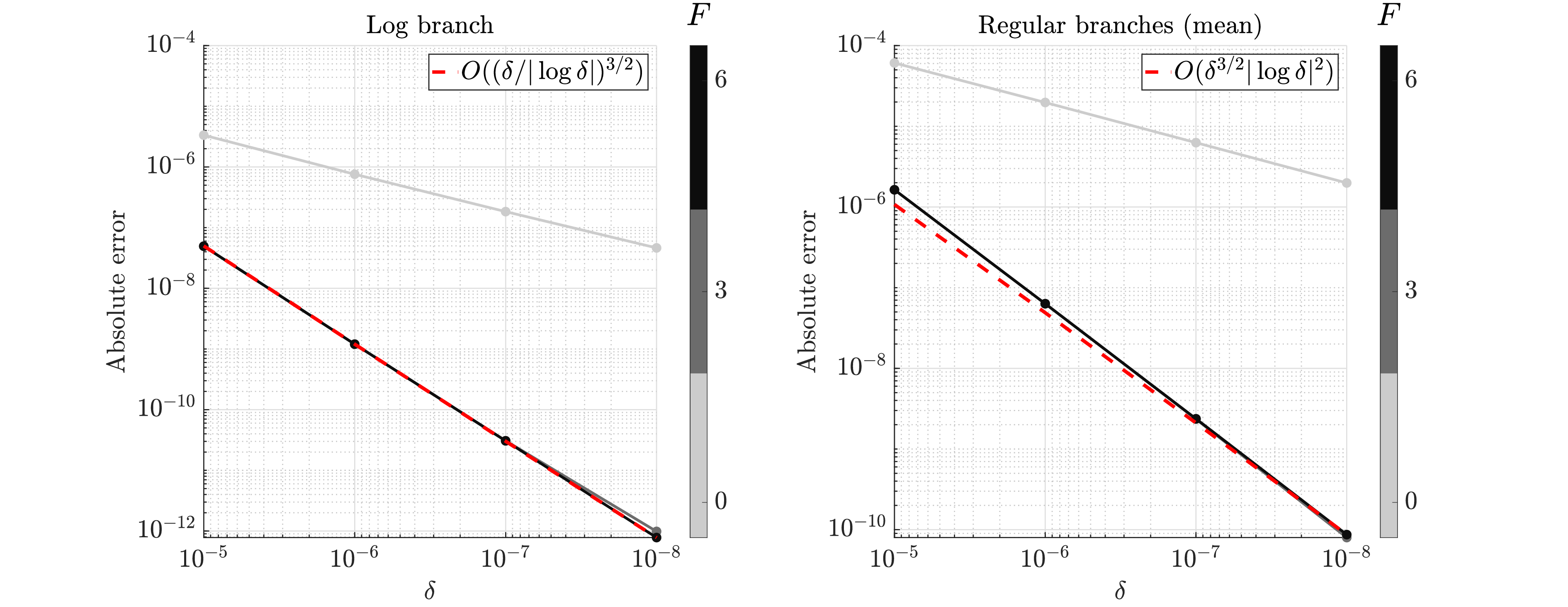}
    \caption{Resonance error convergence for $N=8$ circular resonators, validating Theorem~\ref{thm:reduction}.}
    \label{fig:branch_convergence2}
\end{figure}

\begin{remark}[Relation to the Capacitance Matrix Approach]
Proposition~\ref{prop:effective_F0} provides the $F=0$ analogue of the capacitance matrix of Ammari et al.~\cite{ammari2024functional}. Both reduce the resonance problem to an $N\times N$ system, but they differ significantly in their construction and handling of two-dimensional logarithmic degeneracies.

In the framework of Ammari et al., the $N\times N$ system arises from a Lyapunov--Schmidt reduction onto the null space of $-\tfrac{1}{2}I + K_0'$, spanned by the constant functions. This approach can be interpreted as a Petrov--Galerkin discretization, where the entries $C_{ij} = \langle \mathbf{1}_{\partial D_i}, S_0^{-1}\mathbf{1}_{\partial D_j}\rangle$ effectively involve the inverse of the Laplace single-layer operator $S_0$. This inversion is ill-posed in two dimensions and necessitates specialized treatment of the logarithmic term.

The effective matrix $C_{\mathrm{eff}}$ introduced in this work avoids the inversion of $S_0$ entirely. By employing a standard Galerkin projection, the entries of $K_1^0$ and $K_2^0$ are computed via direct boundary integrals of the kernels $\partial_{\nu_x}G_1$ and $\partial_{\nu_x}G_2$. This enables a direct kernel analysis as shown in Theorem~\ref{thm:branch_scaling}.
\end{remark}

\section{Fast computation of subwavelength resonances}
\label{sec:num}
A common strategy for identifying resonances in general domains is to perform a grid search in the complex $\omega$-plane to locate points where the Galerkin matrix $A_F(\omega, \delta)$ exhibits small singular values. These candidate points are subsequently refined using local root-finding methods, such as Müller's method or Newton iteration.

However, this brute-force approach is often computationally inefficient and unreliable. The initial grid search necessitates repeated assembly and evaluation of $A_F(\omega, \delta)$ over a two-dimensional parameter space, incurring significant computational cost. Furthermore, when resonances are tightly clustered or exhibit high sensitivity to the material contrast $\delta$, the landscape of the smallest singular value becomes ill-conditioned. This makes it difficult to distinguish true resonances from numerical artifacts and renders local iterative methods highly dependent on the quality of initial guesses, often without any \textit{a priori} guarantee regarding the existence or location of resonances.

In contrast, for structured geometries such as circular inclusions, the asymptotic effective matrix $C_{\operatorname{eff}}$ derived in Theorem~\ref{thm:branch_scaling} allows for explicit calculation of the resonance locations. These asymptotic approximations provide reliable initial guesses, which significantly enhance the efficiency and robustness of numerical solvers and eliminate the need for costly global searches.

We summarize the computation of the resonances as follows. The nonlinear eigenvalue search in the final BEM refinement step is a standard scalar Newton iteration applied to the smallest singular value of $A_N(\omega)$; Müller's method \cite{muller1956} may be substituted when derivative evaluation is expensive.

\begin{algorithm}
\caption{Three-level resonance solver for systems of subwavelength resonators}
\label{alg:resonance_solver}
\begin{algorithmic}[1]
\State \textit{(Asymptotic seeds, Theorem~\ref{thm:branch_scaling}.)} Assemble $K_1^0$ and $K_2^0$ from the resonator geometry. Extract $\mu_1$, $\alpha$, and the eigenvalues $\nu_2,\dots,\nu_N$ of $B = P_{V^\perp} K_2^0 P_{V^\perp}$~\eqref{eq:K20}. Evaluate the asymptotic seeds $\omega_{\log}^{(0)}$ and $\omega_j^{(0)}$, $j=2,\dots,N$, from \eqref{eq:log_branch}--\eqref{eq:reg_branch}, together with the corresponding null vectors $v_j^{(0)}$ embedded in the Galerkin space.
\State \textit{(Galerkin assembly, Theorem~\ref{thm:reduction}.)} Assemble $C_0^F$, $K_1^F$, $K_2^F$ from~\eqref{eq:galerkin_matrices}. Initialize $\omega_0 \gets \operatorname{mean}(|\omega_j^{(0)}|)$.
\Repeat
    \State Solve the generalized eigenvalue problem~\eqref{eq:RF}.
    \State Match eigenpairs to $v_j^{(0)}$ by maximum overlap. Set $\omega_0 \gets \operatorname{mean}(|\omega_j^{\mathrm{eff}}|)$.
\Until{$|\omega_0^{\mathrm{new}} - \omega_0| < \varepsilon\,|\omega_0|$} for a tolerance $\varepsilon$
\For{each effective resonance $\omega_j^{\mathrm{eff}}$, $j = 1,\dots,N$}
    \State \textit{(BEM refinement.)} Assemble $A_N(\omega_j^{\mathrm{eff}})$ and initialize $(u,v)$ from its smallest singular triplet.
    \Repeat
        \State Set $g \gets u^* A_N(\omega)\,v$, $g' \gets u^* A_N'(\omega)\,v$. Update $\omega \gets \omega - g/g'$, with step capped at $|\Delta\omega| \le \tfrac{1}{2}|\omega|$. Refresh $(u,v)$ via SVD of $A_N(\omega)$.
    \Until{$|\Delta\omega| < \varepsilon\,|\omega|$}
    \If{$\sigma_{\min}(A_N(\omega)) < \varepsilon$ \textbf{and} $\min_{k<j}|\omega - \omega_k^{\mathrm{bem}}| > \varepsilon_{\mathrm{dist}}$}
        \State Accept $\omega_j^{\mathrm{bem}} \gets \omega$ and record the null vector $v$.
    \Else
        \State Discard as spurious or duplicate.
    \EndIf
\EndFor
\end{algorithmic}
\end{algorithm}

In Section~\ref{sec:numcircle}, we investigate numerical methods for circular resonators, which constitute the fundamental building blocks of subwavelength metamaterials. Owing to the availability of explicit analytical expressions for the single-layer and double-layer potential operators on circular domains, this geometry serves as an ideal benchmark for assessing the accuracy of the asymptotic formulas developed in this paper.

Subsequently, Section~\ref{sec:num2} details the numerical treatment for arbitrary smooth domains, providing examples for complex structures such as ring-shaped and elliptical resonators. Both sections explain numerical treatment for the implementation of the Asymptotic formula~\eqref{eq:RF} from Theorem~\ref{thm:reduction} and the full BEM to solve~\eqref{eq:AN_full_entries} for different domains.

\subsection{Circular resonators}
\label{sec:numcircle}
In the special case of circular inclusions with radius $a_j$ and center $c_j$, we parametrize
\begin{equation}
\partial D_j=\{x_j(\theta)=c_j+a_j(\cos\theta,\sin\theta):\theta\in[0,2\pi)\},
\qquad j=1,\dots,N.
\end{equation}

\subsubsection{FFT based full boundary integral equation}
A naive implementation of the full multipole method for \eqref{eq:AN_full_entries}
leads to a computational cost of $O(F^2N^2)$. This can be significantly
reduced by exploiting the rotational invariance of each boundary. On each $\partial D_j$, we expand the density in a truncated Fourier basis:
\begin{equation}
\phi_j(\theta)=\sum_{n=-F}^{F}\hat{\phi}_{j,n}e^{in\theta}.
\end{equation}
The use of equispaced nodes allows the Fast Fourier Transform (FFT) to evaluate and transform these series in $O(F\log F)$ operations.
We now analyze the structure of the boundary integral operators in this basis by starting with the off-diagonal blocks ($i\neq j$).
Let
\[
c_i-c_j=d_{ij}(\cos\alpha_{ij},\sin\alpha_{ij}), \qquad d_{ij}>a_j.
\]
By Graf's addition theorem (see, e.g., \cite{colton_kress}), the field generated
by inclusion $i$ and restricted to $\partial D_j$ admits the expansion
\begin{equation}
u_i(x_j(\theta))
=
\sum_{n=-F}^{F}
\left(
\sum_{m=-F}^{F}
b_{i,m}\,
H_{m-n}^{(1)}(\omega d_{ij})\,e^{i(m-n)\alpha_{ij}}
\right)
J_n(\omega a_j)e^{in\theta}.
\end{equation}
Thus, in the Fourier basis, the off-diagonal block $A_{ij}$ is given by
\begin{equation}
\label{eq:Aij_modes}
(A_{ij})_{nm}
=
J_n(\omega a_j)\,
H_{m-n}^{(1)}(\omega d_{ij})\,e^{i(m-n)\alpha_{ij}},
\end{equation}
which depends only on the index difference $m-n$. Hence each block is Toeplitz (up to a phase factor), and matrix--vector products can be evaluated via FFT in $O(F\log F)$ operations.
For the diagonal blocks ($i=j$), we consider $x_j(\theta),y_j(\phi)\in\partial D_j$, where
\[
|x_j(\theta)-y_j(\phi)|
=
2a_j\left|\sin\frac{\theta-\phi}{2}\right|,
\]
so the kernel depends only on $\theta-\phi$ and the operator is a convolution. Setting $z_j=\omega a_j$, one has the Fourier expansion
\begin{equation}
H_0^{(1)}\!\bigl(\omega|x_j(\theta)-y_j(\phi)|\bigr)
=
\sum_{m\in\mathbb{Z}}
J_m(z_j)H_m^{(1)}(z_j)e^{im(\theta-\phi)}.
\end{equation}
Therefore, the single-layer operator is diagonal in the Fourier basis:
\begin{equation}
\label{eq:Ajj_modes}
(S_{jj}^\omega)_{mm}
=
\frac{i\pi a_j}{2}\,J_m(\omega a_j)H_m^{(1)}(\omega a_j).
\end{equation}
Similarly, the adjoint double-layer operator is diagonal with
\begin{equation}
(K_{jj}^{\omega,*})_{mm}
=
\frac{i\pi \omega a_j}{4}
\Bigl(
J_m(z_j)H_m^{(1)\,'}(z_j)
+
J_m'(z_j)H_m^{(1)}(z_j)
\Bigr),
\end{equation}
up to the standard $\pm \tfrac12$ jump term.
In conclusion, the off-diagonal blocks exhibit a convolution structure in the modal index,
while the diagonal blocks are exactly diagonal. As a result, all block
matrix--vector products can be carried out using FFT-based convolution,
leading to an overall computational cost of
\[
O(F^2N + NF\log F).
\]
\subsubsection{Effective Galerkin matrices assembly}
\label{sec:galerkin_assembly}
Without prior knowledge of the asymptotic behavior of $\omega$ as $\delta\to 0$, a naive approach would require a two-dimensional search in the complex plane, with each iteration involving assembly of the full matrix $A_F$, despite the TFF-based acceleration explained above. For closely spaced resonances, such iterative methods are unstable and provide no \emph{a priori} information on the number of resonances. This motivates the use of the effective equations in Theorems~\ref{thm:reduction} and~\ref{thm:branch_scaling}.

Each of the three $N(2F+1)\times N(2F+1)$ block-structured matrices $C_0^F$, $K_1^F$, $K_2^F$ from~\ref{eq:galerkin_matrices} are assembled resonator by resonator. For circles, the diagonal block of resonator $i$ is circulant, and its Fourier symbol is available in closed form. For $C_0^F$ the symbol is $-\tfrac{1}{2}$ for $m\neq 0$ and $0$ for $m=0$, reflecting the eigenvalues of $K_0'$ on a circle. For $K_1^F$ the symbol is $-a_i^2/2$ for $m=0$, $a_i^2/4$ for $m=\pm 1$, and $0$ for $|m|\geq 2$. For $K_2^F$ the diagonal entries are
\begin{equation}\label{eq:H2_diag}
k_2(m) = \begin{cases} -\dfrac{a_i^2}{2}\!\left(\log\dfrac{a_i}{2}+\gamma-\dfrac{i\pi}{2}\right) & m=0,\\[6pt] \dfrac{a_i^2}{4}\!\left(\log\dfrac{a_i}{2}+\gamma-\dfrac{i\pi}{2}\right)+\dfrac{a_i^2}{16} & |m|=1,\\[6pt] -\dfrac{a_i^2}{4|m|(m^2-1)} & |m|\geq 2. \end{cases}
\end{equation}

The off-diagonal block coupling resonator $j$ to resonator $i$ is computed by evaluating the relevant kernel on a $Q\times Q$ tensor-product grid of equispaced points on $\partial D_i\times\partial D_j$, with $Q\geq 4(F+4)$, and reading off all Galerkin entries simultaneously from the two-dimensional discrete Fourier transform of the resulting matrix. 

For a fixed quadrature order $Q$, the two-dimensional FFT is computed once per resonator pair and all $(2F+1)^2$ Galerkin entries are read off simultaneously, so the assembly cost $O(N^2 Q^2\log Q)$ does not grow with $F$ as long as $F \leq Q/4 - 4$. Taking $Q = O(F)$ to maintain spectral accuracy, the total cost scales as $O(N^2 F^2\log F)$.

\begin{remark}
    Since the resonators are disjoint, the kernel restricted to $\partial D_i\times\partial D_j$ for $i\neq j$ is analytic and $2\pi$-periodic in each parametric variable, so the two-dimensional trapezoidal rule converges spectrally. The diagonal blocks are evaluated exactly from the Fourier symbol, so the assembly error in all three matrices is dominated by the Fourier truncation at $|m|>F$ rather than by quadrature.
\end{remark}

The asymptotic resonances \eqref{eq:log_branch}--\eqref{eq:reg_branch} from Theorem~\ref{thm:branch_scaling} are computed by a single $N\times N$ eigenvalue problem. One assembles the dense matrices $K_1^0$ and $K_2^0$ from the inter-resonator geometry, extracts $\mu_1$, $\alpha$, and the eigenvalues $\nu_2,\dots,\nu_N$ of $B = P_{V^\perp} K_2^0 P_{V^\perp}$, and evaluates \eqref{eq:log_branch} and \eqref{eq:reg_branch} directly. This amounts to $O(N^2)$ cost for the matrix assemblies. 

\begin{figure}[H]
    \centering
    
    \begin{subfigure}[b]{0.48\textwidth}
        \centering
        \includegraphics[width=\linewidth]{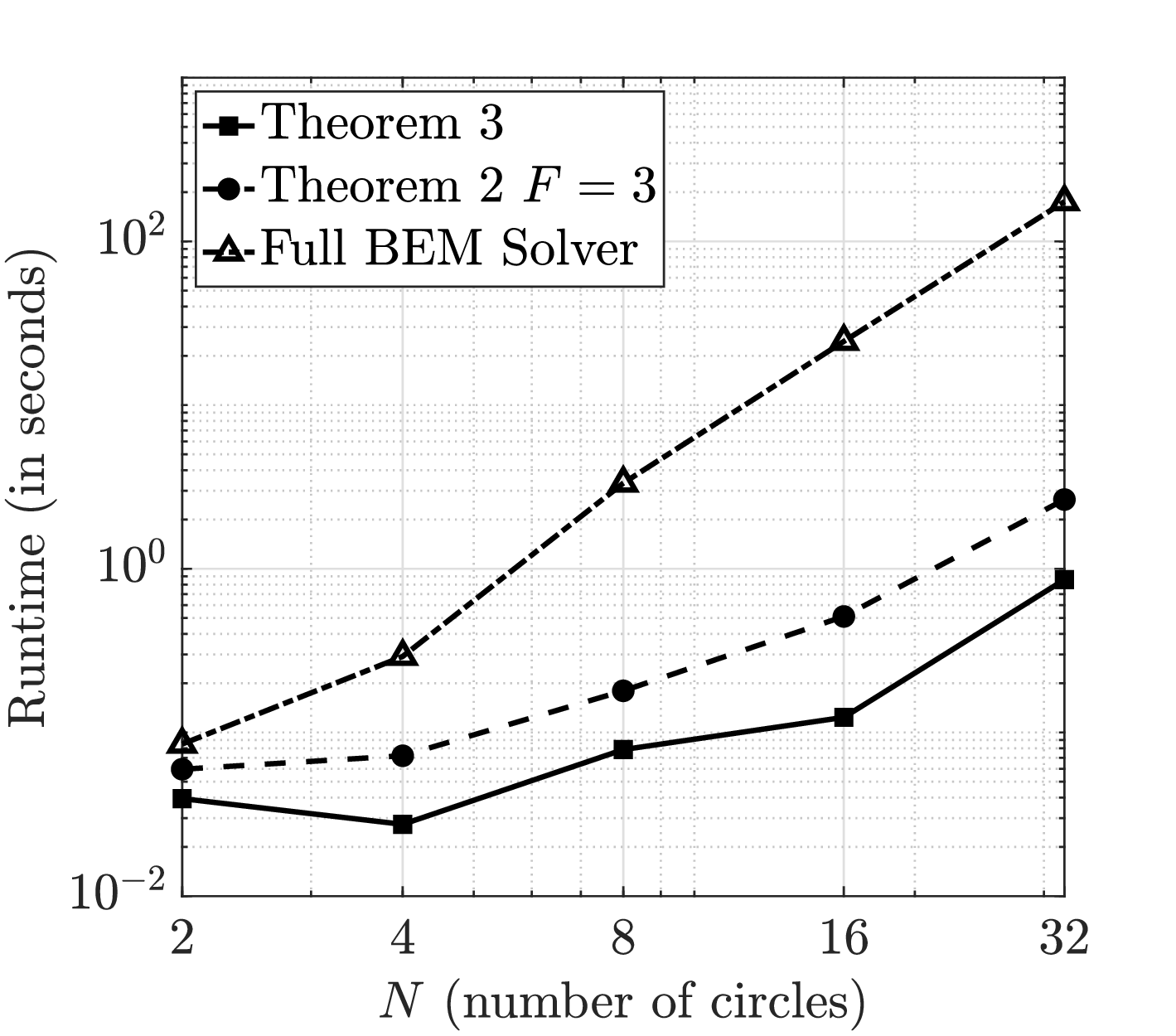}
        \caption{Runtime scaling with problem size $N$.}
        \label{fig:runtime_comparison}
    \end{subfigure}
    \hfill
    \begin{subfigure}[b]{0.48\textwidth}
        \centering
        \includegraphics[width=1.05\linewidth]{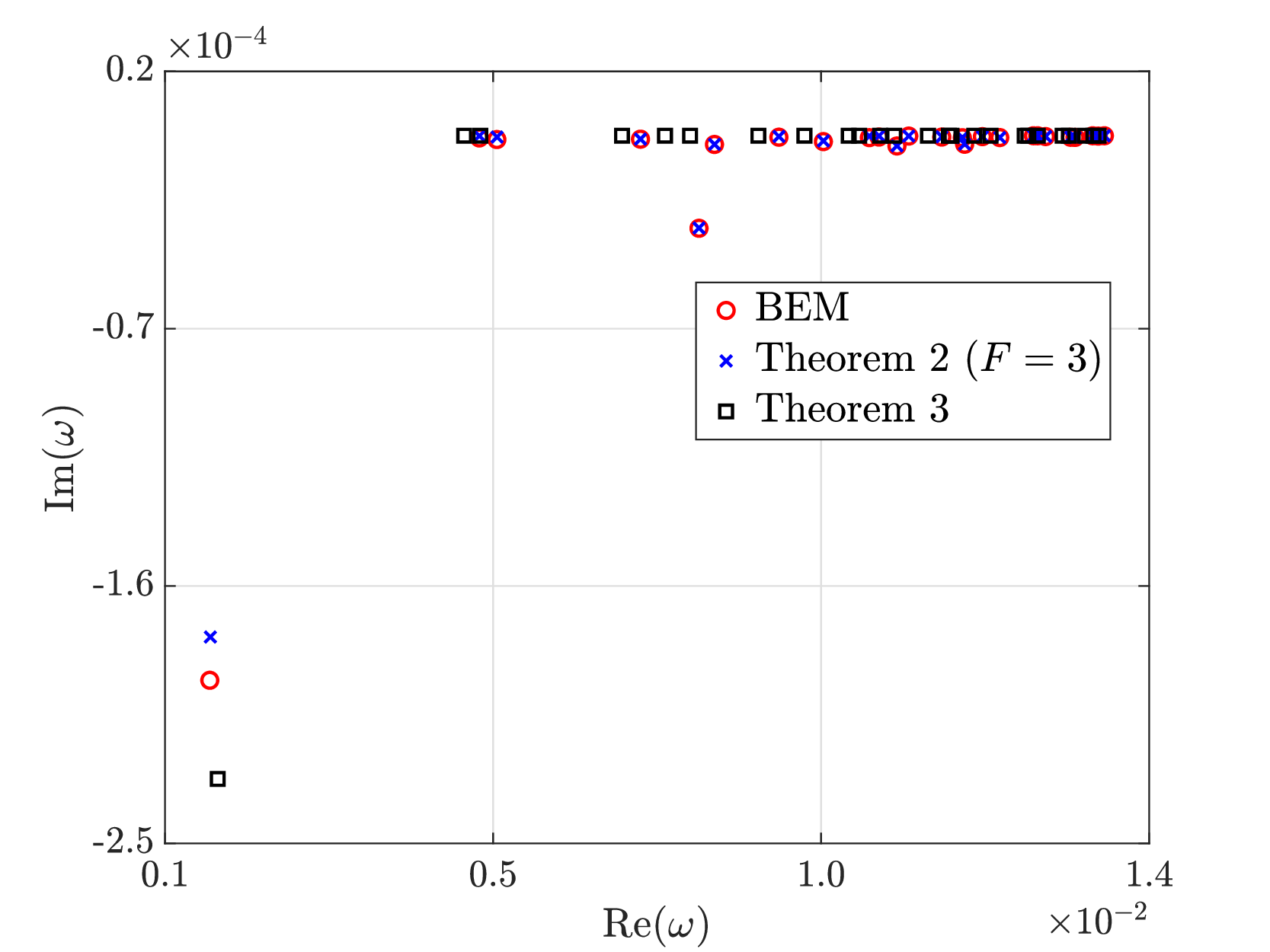}
        \caption{Validation of the computed subwavelength resonances in the complex plane for $N = 25$}
        \label{fig:resonance_comparison}
    \end{subfigure}

    \caption{Performance and accuracy of the resonance solving frameworks. (a) Computational cost scaling as the number of resonators $N$ increases. (b) Alignment of the numerical and theoretical roots in the complex plane, demonstrating the validity of the asymptotic initial seeds.}
    \label{fig:runtime_and_resonances}
\end{figure}
\begin{remark}
Figure~\ref{fig:runtime_comparison} shows that even when the asymptotic seeds from Theorem~\ref{thm:branch_scaling} are used as initial guesses for Newton refinement to full BEM accuracy (dashed line), the refinement cost grows superlinearly in $N$ due to repeated large-scale matrix assembly. Solving the effective $N\times N$ system directly (solid line) maintains near-linear scaling and achieves $\approx 100\times$ speedup at $N=32$.
\end{remark}
\begin{figure}[H]
    \centering
    
    \begin{subfigure}[b]{\textwidth}
        \centering
        \includegraphics[width=\textwidth]{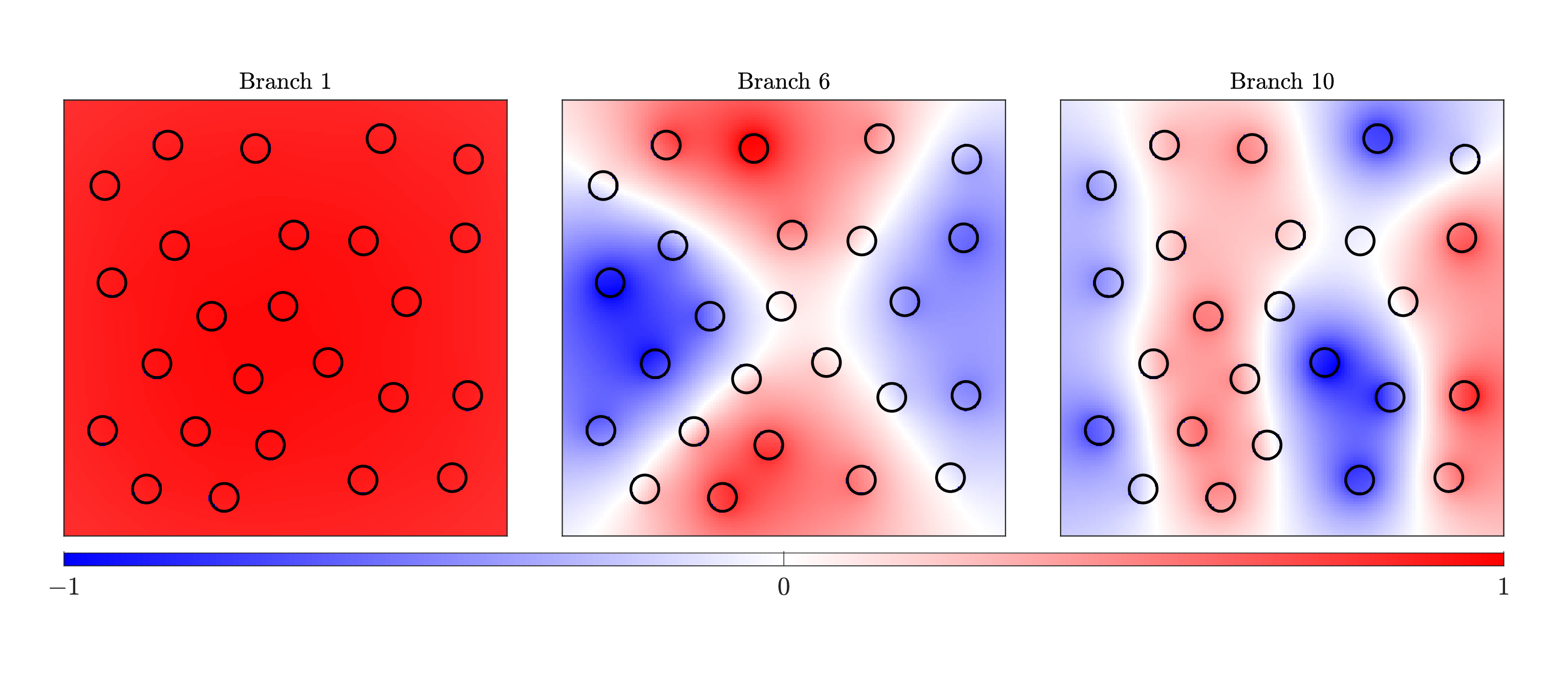}
        \vspace{-1.5cm}
        \caption{Full BEM ($F=3$).}
        \label{fig:mode_shapes_bem}
    \end{subfigure}

    \begin{subfigure}[b]{\textwidth}
        \centering
        \includegraphics[width=\textwidth]{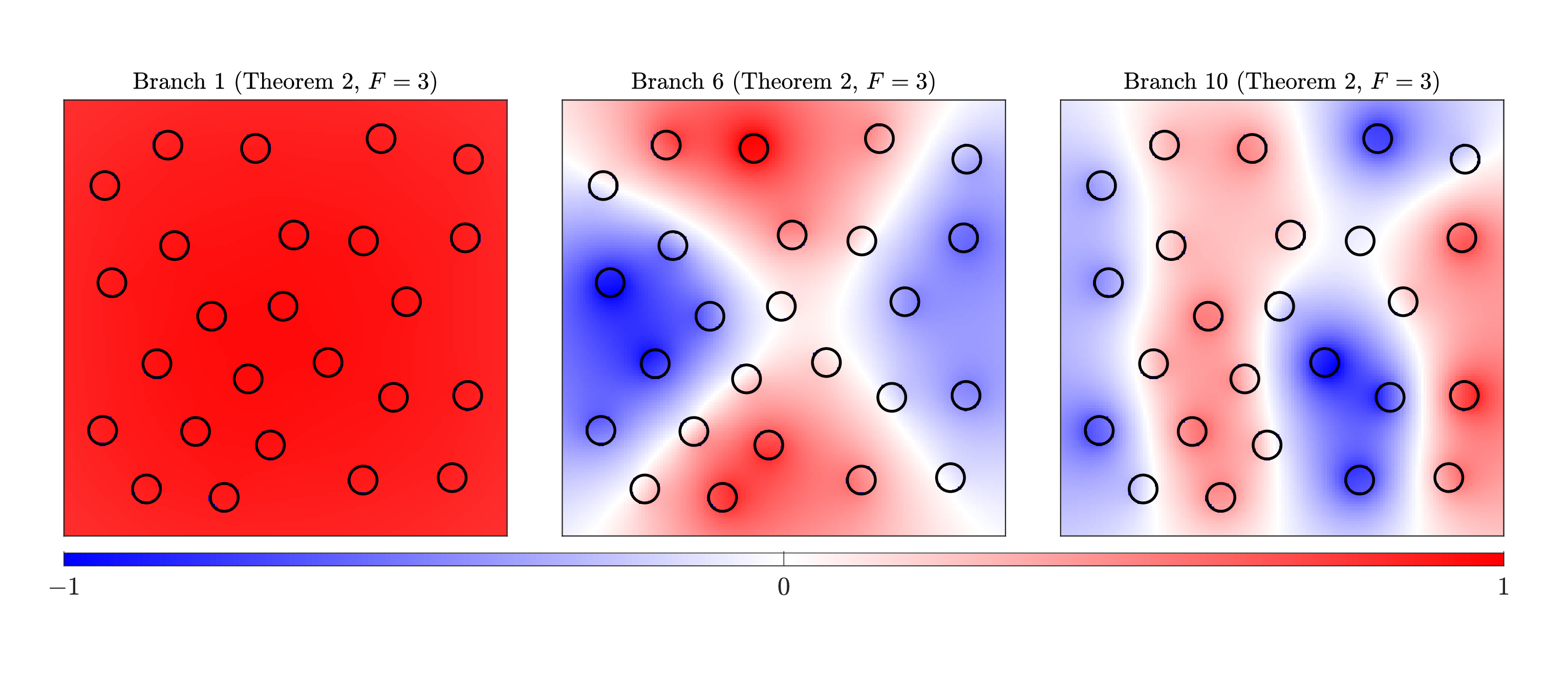}
                \vspace{-1.5cm}
        \caption{Fourier--Galerkin effective matrix (Theorem~\ref{thm:reduction}, $F=3$).}
        \label{fig:mode_shapes_eff}
    \end{subfigure}

    \begin{subfigure}[b]{\textwidth}
        \centering
        \includegraphics[width=\textwidth]{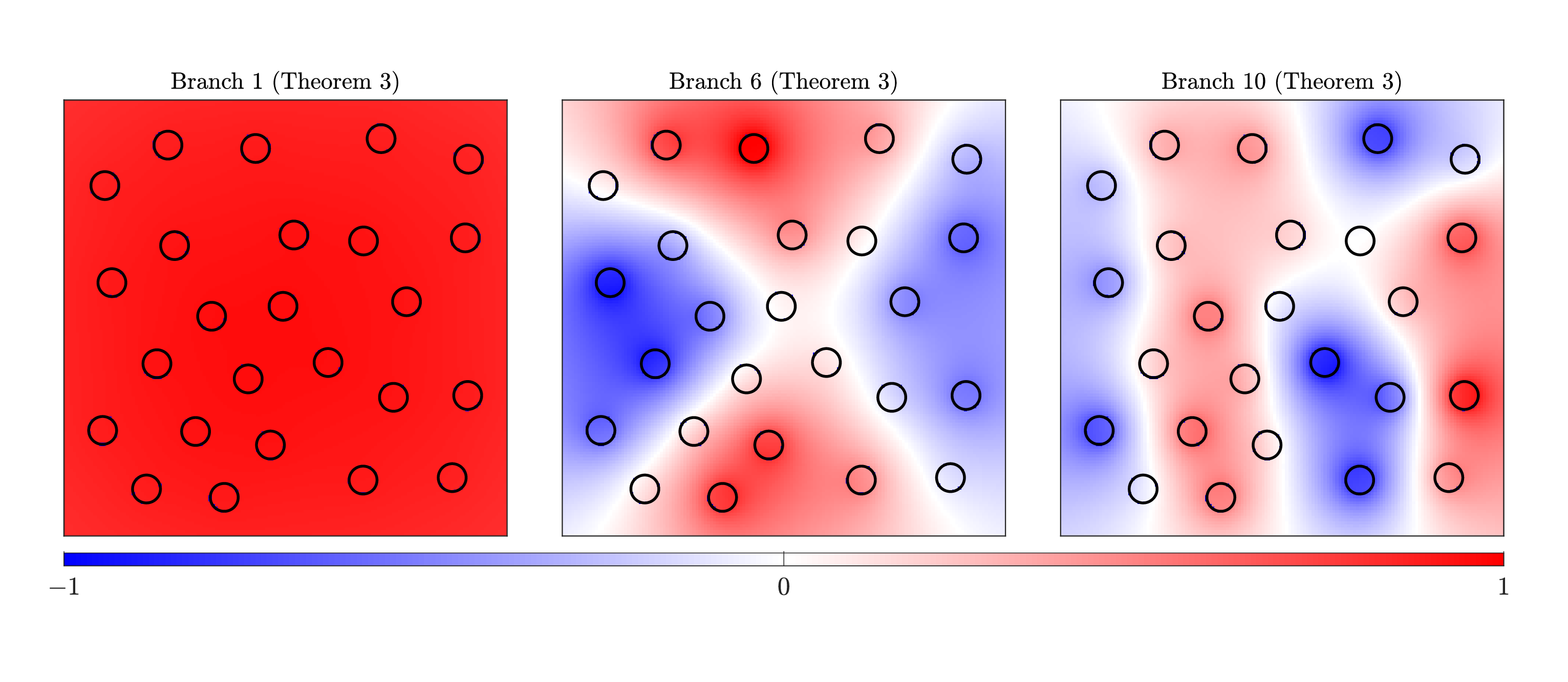}        \vspace{-1.5cm}
        \caption{Asymptotic effective matrix (Theorem~\ref{thm:branch_scaling}).}
        \label{fig:mode_shapes_asymp}
    \end{subfigure}
    
    \caption{Near-field distributions $\mathrm{Re}(u)$ for three representative subwavelength resonances of a system of $N=25$ resonators with $\delta=10^{-5}$. \emph{Left (Branch~1):} the logarithmic Minnaert resonance, acting as a macroscopic monopole. \emph{Centre (Branch~6)} and \emph{Right (Branch~10):} higher-order regular branches with strong inter-resonator interaction. Panel~(a) shows the modes resolved by the full BEM; Panel~(b) shows the Fourier--Galerkin effective matrix approximation at the same truncation order $F=3$; Panel~(c) shows the leading-order asymptotic mode shapes from Theorem~\ref{thm:branch_scaling}.}
    \label{fig:mode_shapes}
\end{figure}

\subsection{Arbitrary smooth domains}
\label{sec:num2}
The assembly of the full BEM system matrix is classical and we refer to~\cite{colton_kress} for details.  We focus here on the assembly of the Galerkin matrices~\eqref{eq:galerkin_matrices}, which requires special treatment of the diagonal blocks.

For circular resonators the diagonal Galerkin blocks are circulant, and their Fourier symbols are available in closed form, so no quadrature is needed on the diagonal. For a general smooth parametrized boundary $\partial D_j$ this structure is lost: the factor $\|x'(\theta)\|$ is no longer constant, the diagonal kernel is not a convolution in $\theta-\phi$, and the Fourier symbol must be replaced by explicit numerical quadrature. The off-diagonal blocks remain smooth and are still handled by the two-dimensional trapezoidal rule at spectral cost, but each diagonal block requires a case-by-case singularity analysis.

In detail, for a smooth parametrized boundary $\partial D_j$, the Galerkin matrix entries of $C_0^F$, $K_1^F$, $K_2^F$ reduce to double integrals of
the form
\begin{equation}
\frac{1}{|\partial D_j|}
\int_0^{2\pi}\!\!\int_0^{2\pi}
\partial_{\nu_{x(\theta)}} G_k(x(\theta),x(\phi))\,
\|x'(\theta)\|\,\|x'(\phi)\|\,
e^{-im\theta}e^{in\phi}
\,d\theta\,d\phi,
\qquad |m|,|n|\leq F.
\end{equation}
Off-diagonal blocks ($i\neq j$) involve kernels evaluated on distinct,well-separated curves and are therefore smooth functions of $(\theta,\phi)$; the trapezoidal rule achieves spectral accuracy there without any special treatment. The difficulty is confined entirely to the diagonal blocks $i=j$, where $x(\theta)$ and $x(\phi)$ approach the same boundary point. We discuss each matrix in turn.

The kernel present in the matrix elements of $C_0^F$ is $\partial_{\nu_x}G_0(x,y) = (x-y)\cdot\nu_x/(2\pi|x-y|^2)$, which appears to have a Cauchy-type singularity on the diagonal. In parametric form, however, the numerator $(x(\theta)-x(\phi))\cdot\nu(\theta)$ vanishes as $O(r^2)$ as $r:=\phi-\theta\to 0$ by tangent-normal orthogonality, while the denominator $|x(\theta)-x(\phi)|^2$ vanishes as $O(r^2)$, so the two factors cancel exactly. The parametric kernel is therefore continuous on $[0,2\pi)^2$, the trapezoidal rule achieves spectral accuracy, and the diagonal entries of the Galerkin matrix are set to this limiting value explicitly.

For $K_1^F$ the kernel $\partial_{\nu_x}G_1(x,y) = -(x-y)\cdot\nu_x/(4\pi)$ is already smooth away from the diagonal. At $\phi = \theta$, tangent-normal orthogonality gives $(x(\theta)-x(\phi))\cdot\nu(\theta) = O((\phi-\theta)^2)$, so the parametric integrand vanishes at the diagonal and is continuous on $[0,2\pi)^2$. The trapezoidal rule achieves spectral accuracy with no further correction.

Finally, we tackle the assembly for the {matrix elements of $K_2^F$.} The kernel $\partial_{\nu_x}G_2$ contains the factor $(x-y)\cdot\nu_x\,\log(|x-y|/2)$. The dot-product factor vanishes as $O(r^2)$ with $r = \phi-\theta$, but this does not cancel the logarithm: the product behaves as $O(r^2\log r)$ near the diagonal, which is continuous but not smooth. Its Fourier coefficients decay only algebraically, so the trapezoidal rule loses spectral accuracy on the diagonal block. We restore it by the Kussmaul--Martensen splitting~\cite{kress2014,saranen2002periodic}. Writing $c_{\log}$ for the coefficient of the logarithmic factor in $\partial_{\nu_x}G_2$, we decompose the parametric kernel as
\begin{align}
\label{eq:splitting}
\partial_{\nu_x}G_2\,\|x'\|^2
&= \Bigl[\partial_{\nu_x}G_2\,\|x'\|^2 - c_{\log}(x(\theta){-}x(\phi))\cdot\nu(\theta)\log|\phi{-}\theta|\Bigr] \notag\\
&\quad + c_{\log}(x(\theta){-}x(\phi))\cdot\nu(\theta)\log|\phi{-}\theta|.
\end{align}
The first bracketed term in~\eqref{eq:splitting} is smooth and $2\pi$-periodic, and is integrated by the trapezoidal rule at spectral accuracy. The second line in~\eqref{eq:splitting} is further split by writing
\begin{equation}
\log|\phi-\theta| = \log\frac{|\phi-\theta|}{|2\sin\frac{\phi-\theta}{2}|} + \log\!\left|2\sin\tfrac{\phi-\theta}{2}\right|,
\end{equation}
where the first term is again smooth and handled by the trapezoidal rule, and the second has the known Fourier series~\cite{kress2014}
\begin{equation}
\log\!\left|2\sin\tfrac{\phi-\theta}{2}\right| = -\sum_{k\neq 0}\frac{1}{2|k|}e^{ik(\phi-\theta)}.
\end{equation}
Its contribution to the Galerkin matrix is computed exactly as a discrete convolution against the analytically known coefficients $\hat{g}_k = -1/(2|k|)$, with no error beyond the aliasing from truncation at $|k|\leq Q/2$. The three parts together restore spectral convergence in $Q$ for the diagonal blocks of $K_2^F$.

\begin{figure}
        \centering
        \includegraphics[width=\textwidth]{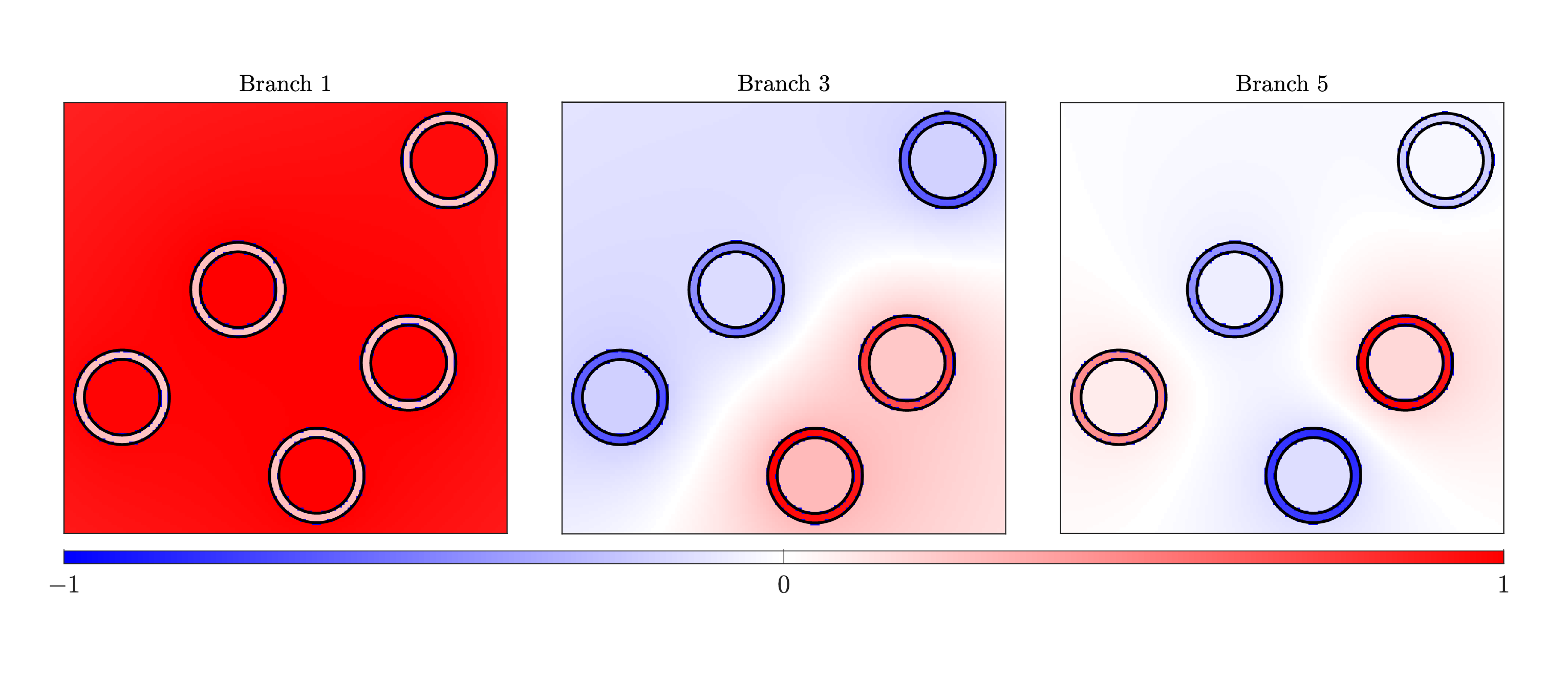}
        \vspace{-1cm}
        \caption{Illustration for selected eigenmodes for ring-shaped resonators.}
        \label{fig:modes_ring}
\end{figure}
\begin{figure}
        \centering
        \includegraphics[width=\textwidth]{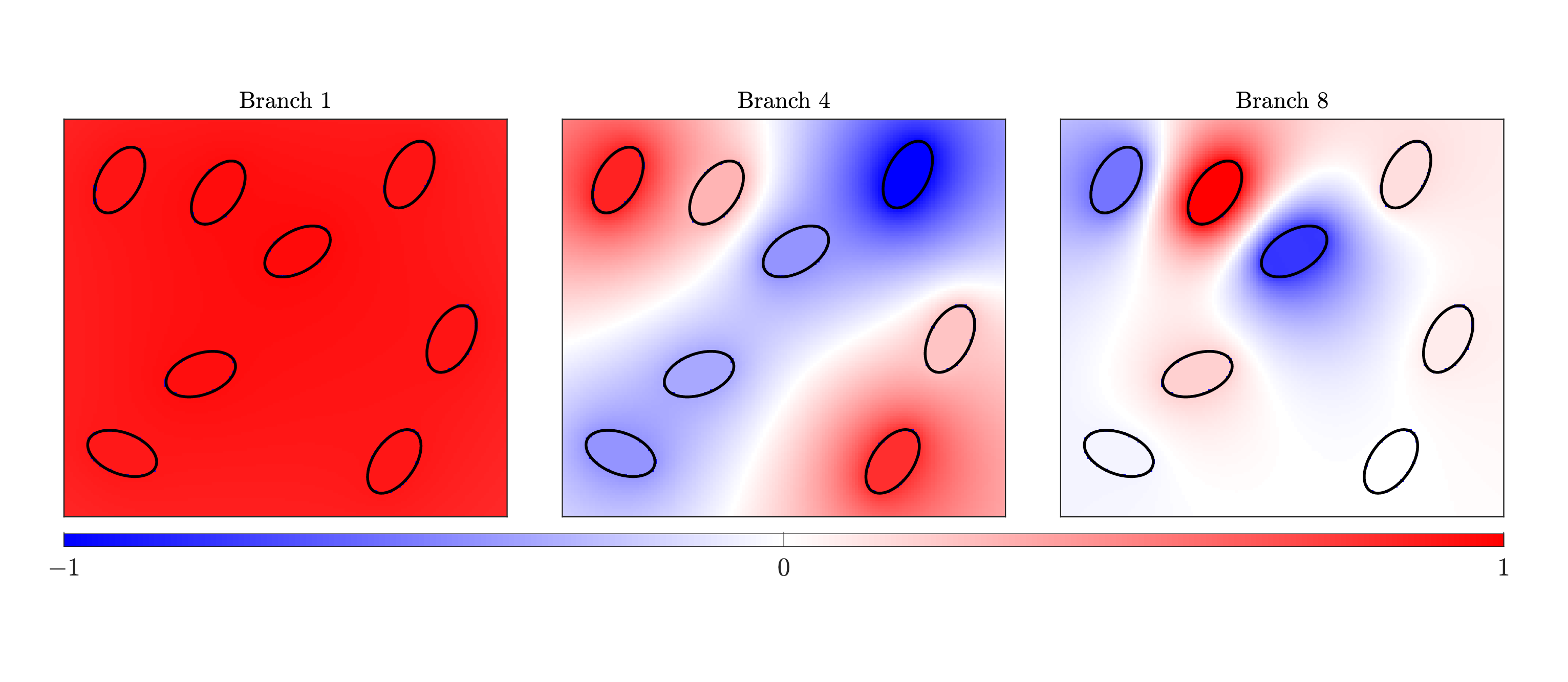}
        \vspace{-1cm}
        \caption{Illustration for selected eigenmodes for elliptical resonators.}
        \label{fig:modes_ellipse}
    \end{figure}
An alternative approach that requires no kernel-specific algebra is provided directly by the \emph{Truncated Fourier Filtering} (TFF) Method introduced in~\cite{bruno2025fast}.

\begin{lemma}[\cite{bruno2025fast}]
\label{lemma:tff}
Let $g\in L^2([0,2\pi])$, $f\in C^\infty(\mathbb{R})$ be $2\pi$-periodic, $b>1$, and $q\geq bF_n$. Denote by
\begin{equation}\label{eq:trunc_fourier}
g_{F_n}(\phi) = \sum_{|k|\leq F_n} \hat{g}_k\,e^{ik\phi}, \qquad \hat{g}_k = \frac{1}{2\pi}\int_0^{2\pi} g(\phi)\,e^{-ik\phi}\,d\phi,
\end{equation}
the truncated Fourier series of $g$, and let $Q_{q,[0,2\pi]}$ denote the $q$-point trapezoidal rule on $[0,2\pi]$. Then for any $m\in\mathbb{N}$,
\begin{equation}\label{eq:tff_estimate}
\left|\int_0^{2\pi} f(\phi)\,g(\phi)\,d\phi - Q_{q,[0,2\pi]}\!\left[f(\phi)\,g_{F_n}(\phi)\right]\right| = O(F_n^{-m}).
\end{equation}
\end{lemma}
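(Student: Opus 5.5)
The plan is to split the error into a truncation error and a quadrature error. We write
\[
\int_0^{2\pi} f g\,d\phi - Q_{q,[0,2\pi]}[f g_{F_n}]
= \int_0^{2\pi} f\,(g-g_{F_n})\,d\phi + \Bigl(\int_0^{2\pi} f g_{F_n}\,d\phi - Q_{q,[0,2\pi]}[f g_{F_n}]\Bigr).
\]
Throughout we expand $f(\phi)=\sum_{j}\hat f_j e^{ij\phi}$. Since $f$ is smooth and periodic, for every $m$ we have $|\hat f_j|\le C_m (1+|j|)^{-m-1}$, with the rapid decay obtained by integrating by parts.

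\emph{Truncation term.} By Parseval,
\[
\int f(g-g_{F_n})\,d\phi = 2\pi\sum_{|k|>F_n}\hat g_k \hat f_{-k}.
\]
Cauchy--Schwarz then bounds this by $2\pi\|g\|_{L^2}\bigl(\sum_{|k|>F_n}|\hat f_k|^2\bigr)^{1/2} = O(F_n^{-m})$ for every $m$. Only $g\in L^2$ is needed here, and all the smoothness is carried by $f$.

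\emph{Quadrature term.} The $q$-point trapezoidal rule integrates $e^{i\ell\phi}$ exactly unless $\ell$ is a nonzero multiple of $q$; in that case it returns $2\pi$ while the true integral is $0$. Hence the aliasing formula for $h=fg_{F_n}$ reads
\[
Q_q[h]-\int h = 2\pi\sum_{p\neq 0}\hat h_{pq},
\qquad
\hat h_\ell = \sum_{|k|\le F_n}\hat g_k\hat f_{\ell-k}.
\]
For $\ell=pq$ with $p\ne0$ and $|k|\le F_n$, the assumption $q\ge bF_n$ with $b>1$ gives
\[
|\ell-k|\ge |p|q-F_n\ge |p|q(1-1/b).
\]
Thus $|\hat f_{\ell-k}|\le C_m' (|p| q)^{-m-2}$. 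Summing over $k$ costs a factor $\sum_{|k|\le F_n}|\hat g_k|\le \sqrt{2F_n+1}\,\|g\|_{L^2}/\sqrt{2\pi}$, and summing over $p$ converges. The quadrature term is therefore $O(F_n^{1/2}q^{-m-2})=O(F_n^{-m})$, using $q\gtrsim F_n$ once more.

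\emph{Main point of care.} The key step is the aliasing bound, which is where $b>1$ enters. Without the gap $q-F_n\gtrsim F_n$, the aliased frequencies $pq-k$ could be small and $\hat f$ would not supply the decay. The $\sqrt{F_n}$ loss from $\ell^1$ versus $\ell^2$ of $\hat g$ is harmless, since $m$ is arbitrary: we absorb it by choosing the decay exponent of $\hat f$ one larger. Combining the two terms gives the claimed bound for every $m$.
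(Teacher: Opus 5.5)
Your proof is correct. The paper itself does not prove this lemma; it is quoted from \cite{bruno2025fast}, so there is no in-paper argument to compare against.

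Your decomposition into two pieces is the natural self-contained route, and every step checks. The Fourier-truncation term is controlled by bilinear Parseval and Cauchy--Schwarz, using only $g\in L^2$. The aliasing term is controlled by the exact action of the trapezoidal rule on exponentials, together with the gap $|pq-k|\ge |p|q(1-1/b)$.

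Two bookkeeping points, neither of which affects validity:
\begin{itemize}
\item With the statement's normalization of $\hat g_k$ one has $\sum_k|\hat g_k|^2=\|g\|_{L^2}^2/(2\pi)$. Your Cauchy--Schwarz constant is therefore an overestimate by a factor $\sqrt{2\pi}$.
\item If ``$q$-point rule'' is read as counting both endpoints $0$ and $2\pi$, the aliased frequencies are multiples of $q-1$ rather than $q$. This changes nothing for all sufficiently large $F_n$.
\end{itemize}

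What your argument adds to the bare citation is explicit constant dependence. Tracking your estimates gives a bound of the form $C\,\|g\|_{L^2}\bigl(F_n^{-m-1/2}+(b-1)^{-m-2}F_n^{-m-3/2}\bigr)$, where $C$ depends only on $m$ and $\|f\|_{C^{m+2}}$.

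This is exactly what the paper needs, but does not spell out, when it asserts right after the lemma that the TFF error is uniform in $\theta$. In that application both $f(\theta,\cdot)$ and $g(\theta,\cdot)=\log(|x(\theta)-x(\cdot)|/2)$ vary with $\theta$. Uniformity follows from your bound because, for a smooth closed curve, $\sup_\theta\|g(\theta,\cdot)\|_{L^2}<\infty$ and $\sup_\theta\|f(\theta,\cdot)\|_{C^{m+2}}<\infty$.
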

For fixed $\theta$, the inner integral in the $K_2^F$ diagonal block has the form $\int_0^{2\pi} f(\theta,\phi)\,g(\theta,\phi)\,d\phi$, where $f(\theta,\cdot) = (x(\theta)-x(\phi))\cdot\nu(\theta)\,\|x'(\theta)\|\,\|x'(\phi)\|$ is smooth and $2\pi$-periodic, and $g(\theta,\phi) = \log(|x(\theta)-x(\phi)|/2)$ is the singular factor lying in $L^2$. This is precisely the setting of Lemma~\ref{lemma:tff}, and the TFF approximation gives spectral convergence, with no analytical knowledge of the singularity structure required. 

Since the TFF approximation error $O(F_n^{-m})$ in Lemma~\ref{lemma:tff} is uniform in $\theta$ for fixed smooth $f(\theta,\cdot)$, integrating in $\theta$ against the bounded weight $\varphi_{v'}(\theta)\|x'(\theta)\|$ preserves the spectral convergence, and the full double integral inherits the same $O(F_n^{-m})$ error bound.

\begin{remark}\label{rem:resonator_shapes}
Figures~\ref{fig:modes_ring} and~\ref{fig:modes_ellipse} illustrate the resonance mode profiles for systems composed of ring-shaped and ellipse-shaped resonators, respectively.
\end{remark}

\begin{center}
\textbf{Acknowledgment}    
\end{center}
    {The author thanks Habib Ammari for the insightful discussions.}

\printbibliography
\end{document}